\titleformat{\subsection}[runin]
{\bfseries} {\thesubsection{.}}{0.15cm}{}[.]
\titleformat{\subsubsection}[runin]
{\em}{\thesubsubsection{.}}{0.15cm}{}[.]
\newtheorem{theorem}{Theorem}[section]
\newtheorem{proposition}[theorem]{Proposition}
\newtheorem{lemma}[theorem]{Lemma}
\newtheorem{claim}[theorem]{Claim}
\newtheorem{corollary}[theorem]{Corollary}
\theoremstyle{definition}
\newtheorem{definition}[theorem]{Definition}
\newtheorem{remark}[theorem]{Remark}
\newtheorem{example}[theorem]{Example}
\newtheorem{question}[theorem]{Question}
\numberwithin{equation}{section}
\numberwithin{figure}{section}
\newcommand\Cscr{\mathscr{C}}
\newcommand\Pscr{\mathscr{P}}
\newcommand\C{\mathbb{C}}
\renewcommand\P{\mathbb{P}}
\newcommand\R{\mathbb{R}}
\newcommand\Z{\mathbb{Z}}
\renewcommand\c{\mathbb{C}}
\newcommand\igot{\mathfrak{i}}
\renewcommand\igot{\mathfrak{i}}
\renewcommand\imath{\igot}
\begin{document}


\fancyhead[LO]{Regular immersions directed by algebraically elliptic cones}
\fancyhead[RE]{A.\ Alarc\'on and F.\ L\'arusson}
\fancyhead[RO,LE]{\thepage}

\thispagestyle{empty}



\begin{center}
{\bf \Large Regular immersions
\\ \vspace{1mm}  
directed by algebraically elliptic cones} 

\bigskip

%
%
{\bf Antonio Alarc\'on \; and \; Finnur L\'arusson}
\end{center}


%
%

\bigskip

\begin{quoting}[leftmargin={5mm}]
{\small
\noindent {\bf Abstract}\hspace*{0.1cm}  
Let $M$ be an open Riemann surface and $A$ be the punctured cone in $\C^n\setminus\{0\}$ on a smooth projective variety $Y$ in $\P^{n-1}$.  Recently, Runge approximation theorems with interpolation for holomorphic immersions $M\to\C^n$, directed by $A$, have been proved under the assumption that $A$ is an Oka manifold.  We prove analogous results in the algebraic setting, for regular immersions directed by $A$ from a smooth affine curve $M$ into $\C^n$.  The Oka property is naturally replaced by the stronger assumption that $A$ is algebraically elliptic, which it is if $Y$ is uniformly rational.  Under this assumption, a homotopy-theoretic necessary and sufficient condition for approximation and interpolation emerges.  We show that this condition is satisfied in many cases of interest.
 

\noindent{\bf Keywords}\hspace*{0.1cm} 
Riemann surface, holomorphic immersion, directed immersion, regular immersion, algebraically elliptic manifold, Oka manifold, Runge approximation, minimal surface


\noindent{\bf Mathematics Subject Classification (2020)}\hspace*{0.1cm} 
Primary 32E30. Secondary 30F99, 32H02, 32Q56, 53A10, 53C42

\noindent{\bf Date}\hspace*{0.1cm} 
5 December 2023.  Minor edits 4 November 2024
}

\end{quoting}



\section{Introduction and main results}
\label{sec:intro}

\noindent
Let $A_0$ be a conical closed complex subvariety of $\C^n$, $n\ge 2$.  By Chow's theorem, $A_0$ is algebraic and is the common zero set of finitely many homogeneous holomorphic polynomials.  Assume that $A=A_0\setminus\{0\}\subset\C^n_* = \C^n\setminus\{0\}$ is smooth and connected, so $A$ is the punctured cone on a connected submanifold $Y$ of $\P^{n-1}$.  If $M$ is an open Riemann surface, a holomorphic immersion $M\to\C^n$ is said to be {\em directed by} $A$, or to be an {\em $A$-immersion}, if its complex derivative with respect to any local holomorphic coordinate on $M$ takes its values in $A$.  A systematic investigation of $A$-immersions was made by Alarc\'on and Forstneri\v c in \cite{AlarconForstneric2014IM}.  In particular, under the assumption that $A$ is an Oka manifold not contained in any hyperplane in $\C^n$,  they proved an Oka principle for $A$-immersions, including the Runge and Mergelyan approximation properties.  (For the theory of Oka manifolds, see the monograph \cite{Forstneric2017E}.  We note that $A$ is Oka if and only if $Y$ is.)  In the subsequent paper \cite{AlarconCastro-Infantes2019APDE} by Alarc\'on and Castro-Infantes, interpolation was added to the picture.  A parametric Oka principle for directed immersions was proved by Forstneri\v c and L\'arusson in \cite{ForstnericLarusson2019CAG}.

In this paper, we pursue the algebraic analogues of the above results for directed immersions. We prove a general uniform approximation theorem with
interpolation for $A$-immersions of open Riemann surfaces in affine spaces in the algebraic category.  To this end, throughout the paper, it is natural to assume that the punctured cone $A$ is not only Oka but algebraically elliptic in the sense of Gromov (see \cite{Gromov1989} or \cite[Definition 5.6.13]{Forstneric2017E}).  
Likewise, in the algebraic setting, the open Riemann surface $M$ will be a smooth affine curve, so $M$ is the complement in a compact Riemann surface $\overline M$ of finitely many points $p_1, \ldots, p_m$, which we call the {\em ends\,} of $M$.  We assume, without loss of generality, that $M$ and $\overline M$ are connected.  A complex-valued function or a complex form on $M$ is regular if it is holomorphic and extends meromorphically to  $\overline M$.  
Algebraic Oka theory is more rigid than standard Oka theory.  Obstructions arise that are not present in the holomorphic theory (see \cite{LarussonTruong2019}), so it is no surprise that approximation by regular $A$-immersions introduces new difficulties.
For instance, it is fundamental in the proofs in \cite{AlarconForstneric2014IM,AlarconCastro-Infantes2019APDE,ForstnericLarusson2019CAG} that every open Riemann surface admits a holomorphic 1-form without zeros, while in general a smooth affine curve does not carry any nowhere-vanishing regular 1-forms.
Our method of proof, dealing directly with 1-forms instead of passing to maps, overcomes this complication without being technically much more difficult.

The following is our first main result.  Recall that a regular map $M\to\C^n$ is proper if and only if its meromorphic extension to $\overline M$ has an effective pole (an actual pole of positive order) at each end of $M$.  Also note that $A$ defines a subbundle $\mathcal A$ of $(T^*M)^{\oplus n}$ with fibre isomorphic to $A$, whose sections are $n$-tuples $(\alpha_1,\ldots,\alpha_n)$ of $(1,0)$-forms with no common zeros, such that the ratio $(\alpha_1:\cdots:\alpha_n)$ takes values in $Y$.
%
%
\begin{theorem}   \label{t:main-theorem}
Let $A\subset\C_*^n$ be the punctured cone on a connected submanifold $Y$ of $\P^{n-1}$, $n\geq 2$, and assume that $A$ is algebraically elliptic and not contained in a hyperplane in $\C^n$.  Let $M$ be a smooth affine curve and $\mathcal A$ be the subbundle of $(T^*M)^{\oplus n}$ defined by $A$.  Finally, let $K$ be a holomorphically convex compact subset of $M$ and $u:K\to\C^n$ be a holomorphic $A$-immersion.  Then the following are equivalent.
\begin{enumerate}[{\rm (i)}]
\item  $u$ can be uniformly approximated on a neighbourhood of $K$ by regular $A$-immersions $M\to\C^n$.
\item  $u$ can be uniformly approximated on a neighbourhood of $K$ by proper regular $A$-immersions $M\to\C^n$ agreeing with $u$ on any given finite set in $K$.
\item  There is a neighbourhood $U$ of $K$ such that the homotopy class of continuous sections of $\mathcal A\,\vert\, U$ that contains $du$ also contains the restriction of a regular section of $\mathcal A$ on $M$.
\item  There is a neighbourhood $U$ of $K$ such that the homotopy class of continuous sections of $\mathcal A\,\vert\, U$ that contains $du$ also contains the restriction of an exact regular section of $\mathcal A$ on $M$ with an effective pole at each end of $M$.
\end{enumerate}
\end{theorem}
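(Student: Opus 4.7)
\emph{Strategy.} The plan is to establish the cycle $(ii)\Rightarrow(i)\Rightarrow(iii)\Rightarrow(iv)\Rightarrow(ii)$. The implication $(ii)\Rightarrow(i)$ is immediate, and so is $(iv)\Rightarrow(iii)$. For $(i)\Rightarrow(iii)$ a soft continuity argument suffices: any regular $A$-immersion sufficiently close to $u$ on a neighbourhood of $K$ has differential homotopic to $du$ through sections of $\Acal$ on a slightly smaller neighbourhood. The real substance of the theorem will therefore lie in the two implications $(iii)\Rightarrow(iv)$ and $(iv)\Rightarrow(ii)$.

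\emph{Proof of $(iv)\Rightarrow(ii)$.} I would integrate the given exact regular section $\alpha$, with base point $x_0\in K$, to obtain a regular $A$-immersion $F:M\to\C^n$ with $F(x_0)=u(x_0)$; the effective poles of $\alpha$ at each end guarantee that $F$ is proper. It remains to modify $F$ so as to approximate $u$ on $K$ and match $u$ on a prescribed finite set $\Lambda\subset K$, without losing properness. For this I would invoke an algebraic relative approximation theorem for $A$-immersions, an algebraic analogue, to be proved along the way, of the Mergelyan-with-interpolation theorems of \cite{AlarconForstneric2014IM, AlarconCastro-Infantes2019APDE}. Its proof rests on the algebraic ellipticity of $A$, which produces a period-dominating spray of regular $A$-sections supported in a compact neighbourhood of $K$; this reduces approximation with interpolation to an implicit-function-theorem correction that leaves the behaviour near the ends of $M$ undisturbed.

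\emph{Proof of $(iii)\Rightarrow(iv)$ (main step).} I would modify the given regular section $\beta$ of $\Acal$ within its homotopy class so that it acquires (a) an effective pole at each end of $M$, and (b) vanishing periods against a basis of $H_1(M;\Z)$. The main tool is an algebraic dominating spray $\rho:E\to A$ provided by the algebraic ellipticity of $A$, where $E\to A$ is an algebraic vector bundle with $\rho(\cdot,0)=\mathrm{id}_A$ and $\partial_s\rho(a,0):E_a\to T_aA$ surjective for every $a\in A$. Pulling $E$ back along $\beta$ produces an algebraic vector bundle on the affine curve $M$ whose module of regular sections is rich. For step (a), I would take a regular section $s_0$ of $\beta^*E$ with a sufficiently high-order pole at each end and set $\beta':=\rho(\beta,s_0)$; since $A$ is conical and $\rho$ is dominating, $\beta'$ will acquire effective poles at the ends. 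For step (b), I would choose finitely many regular sections $s^1,\dots,s^N$ of $(\beta')^*E$ vanishing to high order at the ends so that the family $t\mapsto \rho(\beta',t_1s^1+\cdots+t_Ns^N)$ is period-dominating at $t=0$. An implicit-function-theorem argument then picks $t$ making all periods vanish; the high-order vanishing of the $s^j$ at the ends preserves the effective poles from (a), and small $t$ preserves the homotopy class on $K$.

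\emph{Main obstacle.} The hardest part, and the feature that distinguishes the algebraic from the holomorphic setting (as highlighted in the introduction), is that a smooth affine curve need not admit any nowhere-vanishing regular $1$-form. Thus $\Acal$ cannot be trivialised by a single reference form, and each of the manipulations above must be performed intrinsically in the bundle formalism, maintaining the no-common-zeros condition on tuples $(\alpha_1,\dots,\alpha_n)$ at every point of $M$. The interaction between the pole-installing in (a) and the period-killing in (b) also demands careful coordination: the period-correction sections must vanish to high enough order at the ends so as not to cancel the effective poles from (a), yet still be numerous enough for the period map to be a submersion. Orchestrating these constraints, together with preservation of the homotopy class, will be the technical core of the argument.
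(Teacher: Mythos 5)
Your high-level cycle $(ii)\Rightarrow(i)\Rightarrow(iii)\Rightarrow(iv)\Rightarrow(ii)$ differs from the paper's organization, and the difference is not innocent.  The paper treats $(iii)\Rightarrow(ii)$ as the single substantive implication, deducing it from a separate approximation theorem for sections with period control (Theorem~\ref{t:sections}); conditions $(iv)$ and the remaining arrows then fall out by easy continuity arguments.  Your route through $(iv)$ does not save work: your $(iv)\Rightarrow(ii)$ step is ``integrate the exact regular section, then invoke an algebraic Mergelyan-with-interpolation theorem to be proved along the way.''  But that Mergelyan theorem \emph{is} the entire content of $(iii)\Rightarrow(ii)$; once you have it, the detour through $(iv)$ is pointless, and without it $(iv)$ gives you nothing usable (merely integrating the section in $(iv)$ produces a proper regular $A$-immersion with no relation whatsoever to $u$ on $K$).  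So the decomposition conceals, rather than isolates, the hard step.

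Two technical points in your sketch for the hard step would fail as stated.  First, the pole-installing mechanism is wrong: if $s_0$ is a section of $\beta^*E$ with a pole at an end, then $\rho(\beta,s_0)$ is simply undefined there --- the spray $\rho$ is a regular map defined on the total space of an algebraic vector bundle, and its output lands in $A\subset\C^n_*$, which is bounded on compacts; nothing forces the composed section of $\Acal$ to blow up at the punctures.  The paper instead forces effective poles by an elementary but essential maximum-modulus argument: one prescribes the holomorphic section on small auxiliary discs $\Delta_i\subset\mathring D_i\setminus\{p_i\}$ so that each component satisfies $\min_\Delta |h_k|>\max_{bL}|h_k|$, and once the regular approximant is close enough this inequality persists and rules out holomorphicity at $p_i$ (see the end of the proof of Theorem~\ref{t:sections}, modelled on Proposition~\ref{pro:large}).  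Second, the essential external input is missing: Forstneri\v c's algebraic approximation theorem \cite[Theorem~3.1]{Forstneric2006AJM} for sections of algebraic fibre bundles with algebraically elliptic fibre.  The paper builds a holomorphic period-dominating spray of sections $\sigma\theta$ (using the holomorphic/Oka machinery of \cite{AlarconForstneric2014IM,AlarconCastro-Infantes2019APDE}), views it as a holomorphic section of the pullback of $\Acal$ over $\C^k\times M$, and then applies Forstneri\v c's theorem to replace it by a regular spray; condition $(iii)$ enters precisely here as the homotopy hypothesis of that theorem.  Your ad hoc dominating-spray construction plays a morally similar role, but without this citation (or a proof of its equivalent) the implication is open, not proved.

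Finally, your diagnosis of the ``main obstacle'' is slightly off.  You are right that a smooth affine curve need not carry a nowhere-vanishing \emph{regular} $1$-form, but the paper does not respond by working ``intrinsically in the bundle formalism'': it freely uses a nowhere-vanishing \emph{holomorphic} $1$-form $\theta$ to pass between sections of $\Acal$ and maps $M\to A$.  The regularity issue is absorbed by the fact that $\Acal$ is an \emph{algebraic} bundle, so approximating a holomorphic section by a regular one is exactly what Forstneri\v c's theorem does; no global regular trivialisation is needed.
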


\begin{remark}\label{rem:intro}
(a)  The only punctured cone in $\C^*$ is $\C^*$ itself, which is not algebraically elliptic, so Theorem \ref{t:main-theorem} is vacuous for $n=1$.  For $n=2$, the only punctured cone in $\C_*^2$ satisfying the assumptions of Theorem \ref{t:main-theorem} is $\C_*^2$ itself.  Our proof of Theorem \ref{t:main-theorem} uses several results from \cite{AlarconForstneric2014IM,AlarconCastro-Infantes2019APDE}, which are stated for holomorphic immersions directed by punctured cones $A\subset\C_*^n$ with $n\ge 3$ and $A\neq\C_*^n$.  Although an inspection of the proofs in \cite{AlarconForstneric2014IM, AlarconCastro-Infantes2019APDE} shows that the results that we apply are valid for $n\ge 2$ and $A=\C_*^n$ (in fact, extensions of some of these results are proved for arbitrary $n\ge 1$ in \cite{AlarconLarusson2017IJM} and \cite[Section 5]{ForstnericLarusson2019CAG}), we shall give a separate proof, much simpler, of this special case of Theorem \ref{t:main-theorem}.

(b)  Algebraic ellipticity may be formulated in several nontrivially equivalent ways.  It was observed in \cite{LarussonTruong2019} that the following properties of a smooth algebraic variety are equivalent: algebraic subellipticity, the algebraic homotopy Runge property, and the algebraic version of Gromov's Ell${}_1$ property (for the definitions of these properties, see \cite{LarussonTruong2019}).  And recently, Kaliman and Zaidenberg showed that algebraic ellipticity and algebraic subellipticity are equivalent \cite{KalimanZaidenberg2024}.

The optimal known sufficient condition for $A$ to be algebraically elliptic is given by a very recent theorem of Arzhantsev, Kaliman, and Zaidenberg \cite[Theorem 1.3]{ArzhantsevKalimanZaidenberg2024}.  They showed that $A$ is algebraically elliptic if $Y$ is uniformly rational, meaning that $Y$ is covered by Zariski-open sets, each isomorphic to a Zariski-open subset of affine space (see also \cite{BodnarEtAl2008BLMS}, where 
uniformly rational manifolds are called \lq\lq plain\rq\rq, and \cite{BogomolovBohning2014}).   
Then $Y$ itself is also algebraically elliptic \cite[Theorem 1.3]{ArzhantsevKalimanZaidenberg2024}.
The class of uniformly rational manifolds is closed under products and under blow-ups with smooth centres.  Also, the total space of an algebraic fibre bundle over a uniformly rational manifold with a uniformly rational fibre is itself uniformly rational.

Most (but not all) known examples of 
uniformly rational manifolds are in fact covered by Zariski-open sets isomorphic to affine space itself.  Such manifolds are said to be A-covered or of class $\mathscr A_0$ (see \cite[Section 4]{ArzhantsevPerepechkoSuss2014JLMS}).  The following are examples of manifolds of class $\mathscr A_0$:  projective spaces, Grassmannians, flag manifolds, smooth projective rational surfaces, and smooth projective toric varieties.

(c)  Approximating holomorphic maps by regular maps is an important theme in algebraic geometry.  Such approximation is impossible in general, even for maps between very simple affine algebraic manifolds.  For example, let $\Sigma^n$, 
$n\geq 2$, be the complex $n$-sphere $\{(z_0,\ldots,z_n)\in\C^{n+1}:z_0^2+\cdots+z_n^2=1\}$.  It is a homogeneous space of the connected linear algebraic group $\textrm{SO}(n+1, \C)$, which has no nontrivial characters, so $\Sigma^n$ is algebraically flexible \cite[Proposition 5.4]{ArzhantsevFlennerKalimanKutzschebauchZaidenberg2013DMJ} and hence algebraically elliptic.  Loday showed that every regular map $\Sigma^p\times \Sigma^q\to \Sigma^{p+q}$ is nullhomotopic when $p$ and $q$ are odd, but there is a continuous map, and hence a holomorphic map, $\Sigma^p\times \Sigma^q\to \Sigma^{p+q}$ that is not nullhomotopic (\cite{Loday1973}; see also \cite[Example 6.15.7]{Forstneric2017E}) and thus not approximable by regular maps.

(d)  The main task in the proof of Theorem \ref{t:main-theorem} is to show that (iii) implies (ii).  A key ingredient in our proof is Forstneri\v c's approximation theorem for sections of an algebraic fibre bundle with algebraically elliptic fibre \cite[Theorem 3.1]{Forstneric2006AJM} (see also \cite[Theorem 6.15.3]{Forstneric2017E}).  Applying this theorem requires a homotopy-theoretic necessary and sufficient condition, which in our setting corresponds to (iii).
\end{remark}

The following definitions are convenient.

\begin{definition}\label{def:good}
With notation as above, we say that a pair $(A, M)$ is {\em good\,} if $\mathcal A$ has a regular section on $M$, and we say that $(A, M)$ is {\em very good\,} if every homotopy class of continuous (or, equivalently, holomorphic) sections of $\mathcal A$ on $M$ contains a regular section.  Furthermore, we say that $A$ is {\em good\,} if $(A, M)$ is good for every smooth affine curve $M$, and we say that $A$ is {\em very good\,} if $(A, M)$ is very good for every smooth affine curve $M$. 
\end{definition}

\begin{remark}  \label{rem:serre}
An algebraically elliptic projective manifold is unirational and hence simply connected by Serre's \cite[Proposition 1]{Serre1959JLMS}.  Also, by the first half of \cite[Lemma~2]{Serre1959JLMS}, the fundamental group of an algebraically elliptic manifold is finite.  Thus, under the assumptions of Theorem \ref{t:main-theorem}, $\pi_1(Y)=0$ and $\pi_1(A)$ is finite.  Also, $\pi_1(A)$ is cyclic, being a quotient of the fundamental group of the fibre $\C^*$ of the bundle $A\to Y$. 

The tangent bundle $TM$ is holomorphically trivial, so continuous sections of $\mathcal A$ may be identified with continuous maps $M\to A$.  Since $M$ has the homotopy type of a finite bouquet of circles and deformation-retracts onto an embedded bouquet, the homotopy classes of continuous maps $M\to A$ are in bijective correspondence with homomorphisms $\pi_1(M) \to \pi_1(A)$.  Hence there are only finitely many such classes.  If $A$ is simply connected, then there is only one class, so $(A, M)$ is good if and only if it is very good.  In general, we are not aware of any pair $(A,M)$ as in Theorem \ref{t:main-theorem} that is good but not very good.
\end{remark}

In Section \ref{sec:cones}, we establish the following sufficient conditions for a cone to be good and a pair to be good or very good. 
%
%
\begin{proposition}  \label{pro:sufficient}
Let $Y$, $A$, and $M$ be as in Theorem \ref{t:main-theorem}.
\begin{enumerate}[{\rm (a)}]
\item  If $Y$ has a rational curve of degree 1 or 2, then $A$ is good.
\item  If $M$ has genus 0, then $(A, M)$ is very good.
\item  If $TM$ is algebraically trivial, for example if $M$ has genus 1 or is planar, then $(A,M)$ is good.
\end{enumerate}
\end{proposition}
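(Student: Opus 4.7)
Each part is the existence of a regular section of $\mathcal A$ on $M$. The common thread is that a regular section $(\alpha_1,\ldots,\alpha_n)$ of $\mathcal A$ is equivalent data to a pair $(\phi,\sigma)$, where $\phi:M\to Y$ is the projective direction $p\mapsto[\alpha_1(p):\cdots:\alpha_n(p)]$ and $\sigma$ is a nowhere-vanishing regular section of the line bundle $K_M\otimes\phi^*\mathcal O_Y(-1)$, with $\mathcal O_Y(-1)$ the restriction to $Y$ of the tautological line bundle on $\P^{n-1}$.

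For part (c), fix a nowhere-vanishing regular 1-form $\omega$ on $M$ (which exists by hypothesis) and any $a_0\in A$. With $\phi$ the constant map of value $[a_0]\in Y$, the bundle $K_M\otimes\phi^*\mathcal O_Y(-1)$ is trivial and $\omega\cdot a_0=(\omega a_{0,1},\ldots,\omega a_{0,n})$ is the required regular section of $\mathcal A$. The examples are standard: if $M$ has genus $1$, then $\overline M$ is elliptic so $K_{\overline M}$ and hence $K_M$ is trivial; if $M\subset\C^2$ is a smooth affine plane curve $\{F(x,y)=0\}$, then the Poincar\'e residue $\omega=dx/F_y=-dy/F_x$ is a regular nowhere-vanishing 1-form on $M$.

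For part (b), after a M\"obius change placing one end at $\infty$, the form $dz$ trivialises $K_M$, so regular (resp.\ continuous) sections of $\mathcal A$ correspond to regular (resp.\ continuous) maps $M\to A$. Since $M=\P^1\setminus\{\infty,p_2,\ldots,p_m\}$ has the homotopy type of a bouquet of $m-1$ circles, homotopy classes of continuous sections correspond bijectively to $\mathrm{Hom}(\pi_1(M),\pi_1(A))$. By Remark \ref{rem:serre}, $\pi_1(A)=\Z/k$ is finite cyclic, generated by any loop $\lambda\mapsto\lambda a_0$ in a $\C^*$-fibre $\C^*\cdot a_0\subset A$. Given integers $n_2,\ldots,n_m$ realising a prescribed homomorphism $\pi_1(M)\to\Z/k$, the regular map
$$f(z)=\Bigl(\prod_{i=2}^m(z-p_i)^{n_i}\Bigr)a_0 : M\longrightarrow A$$
takes values in the cone $A$, and a small positively oriented loop around $p_j$ is carried to a loop winding $n_j$ times in the fibre through $a_0$, realising the prescribed class; hence $(A,M)$ is very good.

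For part (a), fix a degree-$d$ parameterisation $g=[P_1:\cdots:P_n]:\P^1\to Y$ with $d\in\{1,2\}$, the $P_i$ being homogeneous of degree $d$ with no common zero outside the origin of $\C^2$. The plan is to find a regular map $h:M\to\P^1$ so that, setting $\phi=g\circ h$ and $L=h^*\mathcal O_{\P^1}(1)$, the line bundle $K_M\otimes\phi^*\mathcal O_Y(-1)=K_M\otimes L^{-d}$ is trivial; equivalently, $L^d\cong K_M$ in $\mathrm{Pic}(M)$. Since each $[p_i]$ has degree $1$ on $\overline M$, the quotient $\mathrm{Pic}(M)=\mathrm{Pic}(\overline M)/\langle[p_i]\rangle$ is a quotient of the Jacobian $\mathrm{Pic}^0(\overline M)$, a complex torus and therefore a divisible abelian group; divisibility descends to quotients, so $K_M$ admits a $d$-th root $L\in\mathrm{Pic}(M)$. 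Being coherent on the affine curve $M$, $L$ is globally generated, so one can pick sections $s_0,s_1\in H^0(M,L)$ without common zero, defining $h:=[s_0:s_1]:M\to\P^1$ with $h^*\mathcal O_{\P^1}(1)=L$. Then
$$\alpha:=(P_1(s_0,s_1),\ldots,P_n(s_0,s_1))\in H^0\bigl(M,(L^d)^{\oplus n}\bigr)=H^0(M,K_M^{\oplus n})$$
has ratio $g\circ h\in Y$ everywhere and has no common zero on $M$ (the only common zero of $P_1,\ldots,P_n$ is the origin of $\C^2$, while $(s_0,s_1)$ has no common zero), so $\alpha$ is a regular section of $\mathcal A$. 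The subtle point in (a) is the divisibility of $\mathrm{Pic}(M)$, which rests on the fact that the Jacobian of $\overline M$ is a complex torus.
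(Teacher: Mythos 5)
Your proposal is correct and follows the same overall approach as the paper, with a modest variation in part~(a). Parts~(b) and~(c) match the paper's argument: in~(c), both you and the paper take a constant map $M\to A$ and multiply by a nowhere-vanishing regular $1$-form, and you usefully spell out why such a form exists for genus~$1$ and planar curves; in~(b), the paper factors a homomorphism $\pi_1(M)\to\pi_1(A)$ through the surjection $\pi_1(\C^*)\to\pi_1(A)$ using freeness of $\pi_1(M)$, and your explicit map $z\mapsto\bigl(\prod_i(z-p_i)^{n_i}\bigr)a_0$ is precisely a concrete regular realisation of such a homomorphism landing in a single $\C^*$-fibre of $A\to Y$.

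In part~(a) the construction is the same --- substitute a pair of sections with no common zeros into the homogeneous degree-$d$ parametrisation $[P_1:\cdots:P_n]$ of the rational curve --- but the auxiliary line bundle $L$ with $L^d\cong K_M$ is obtained differently. The paper takes $L=K_M$ for $d=1$ and, for $d=2$, the restriction to $M$ of a theta characteristic on $\overline M$ (which exists since $\deg K_{\overline M}=2g-2$ is even). You instead observe that $\mathrm{Pic}(M)$ is a quotient of the Jacobian $\mathrm{Pic}^0(\overline M)$ and hence divisible, so $K_M$ admits a $d$-th root in $\mathrm{Pic}(M)$ for every $d$. Both routes are valid; yours treats $d=1$ and $d=2$ uniformly and would extend verbatim to rational curves of higher degree. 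One small step you should make explicit: global generation of $L$ on the affine curve $M$ only directly gives one section through each point, so to get a pair without common zeros, pick a nonzero $s_0$ (its zero set $Z$ is finite) and then a generic linear combination $s_1$ of sections chosen not to vanish at the points of $Z$; this is the same Riemann--Roch-flavoured fact the paper invokes to produce the forms $\omega$ and $\eta$.
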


Projective varieties of low codimension and defined by equations of low degree are covered by lines.  Among the instances of this metatheorem are \cite[Lemma V.4.8.1, Exercise V.4.10.5]{KollarRatCurvesOnAlgVar} and \cite[Theorem 3.1]{MarchesiMassarenti2014GeomDed}.  Similar results exist for conics in place of lines (see \cite[Proposition 2.4]{IonescuRusso2010Crelle}).  It is reasonable to ask whether every punctured cone $A$ as in Theorem \ref{t:main-theorem} is good.

Thus Proposition \ref{pro:sufficient}(a) applies to many cones of interest in the present context, such as those in the following examples.
%
%
\begin{example}\label{ex:}
(a) A first important example is the \lq\lq large cone\rq\rq\ $A=\C_*^n$, $n\geq 2$; see Remark \ref{rem:intro}(a). 
Every immersion into $\C^n$ is directed by $A$.  Theorem \ref{t:main-theorem} is new even for these simplest of all cones. 

(b) A cone of fundamental importance in the theory of minimal surfaces in Euclidean space and satisfying the hypotheses of Theorem \ref{t:main-theorem} is the punctured null quadric 
$A=\{(z_1, \ldots, z_n)\in\C_*^n: z_1^2+\cdots+ z_n^2=0\}$, $n\ge 3$.  For this cone, Theorem \ref{t:main-theorem} was proved in \cite{AlarconLopez2022APDE} by a rather involved method that heavily uses the special geometry of the cone.  The initial motivation for the present work was to generalise this result to the largest possible class of cones, while giving an easier, more conceptual proof than the proof in \cite{AlarconLopez2022APDE}.

(c) The cone $A=\{(z_1,\ldots,z_4)\in\C_*^4: z_1z_4-z_2z_3=0\}$ is isomorphic to the punctured null quadric in $\C_*^4$ and satisfies the requirements of Theorem \ref{t:main-theorem}.  It plays a key role in the theory of surfaces of constant mean curvature $1$ in hyperbolic space.  Bryant \cite{Bryant1987Asterisque} calls a holomorphic immersion of a Riemann surface into the special linear group $\textrm{SL}_2(\C)=\{(z_1,\ldots,z_4)\in\C^4: z_1z_4-z_2z_3=1\}$, directed by $A$, a null curve.  The projection of a holomorphic null curve in $\textrm{SL}_2(\C)$ to the hyperbolic 3-space $\mathbb H^3=\textrm{SL}_2(\C)/\textrm{SU}(2)$ is a conformally immersed surface of constant mean curvature $1$ (also called a Bryant surface), and every simply connected surface in $\mathbb H^3$ of constant mean curvature $1$ arises in this way.  We refer to \cite{UmeharaYamada1993AM,CollinHauswirthRosenberg2001AM} and the introduction of the more recent paper \cite{AlarconCastroHidalgo2025} for background on this subject.  It is an open question whether every open Riemann surface admits a proper holomorphic null immersion or embedding into $\textrm{SL}_2(\C)$ \cite[Problem 1]{AlarconForstneric2015MA}. The same question in the algebraic category is open as well.
\end{example}

In Section \ref{sec:cones}, we prove that all the cones in Example \ref{ex:} are in fact very good and discuss their interest in the literature. Theorem \ref{t:main-theorem} and the following corollaries, concerning good and very good pairs, are proved in Section \ref{sec:proofs}; see Remark \ref{rem:vj} for a slight extension of these results. 
%
%
\begin{corollary}   \label{c:second-corollary}
Under the assumptions of Theorem \ref{t:main-theorem}, the following are equivalent.
\begin{enumerate}[{\rm (i)}]
\item  There is a regular $A$-immersion $M\to\C^n$.
\item  For any finite set $\Lambda\subset M$, every map $\Lambda\to\C^n$ extends to a proper regular $A$-immersion $M\to\C^n$.
\item  $(A,M)$ is good.
\item  There is an exact regular section of $\mathcal A$ on $M$ with an effective pole at each end of $M$.
\end{enumerate}
\end{corollary}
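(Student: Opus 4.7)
The easy implications (ii) $\Rightarrow$ (i) $\Rightarrow$ (iii) are immediate, since a proper regular $A$-immersion is in particular a regular $A$-immersion, and the differential of a regular $A$-immersion is a regular section of $\mathcal{A}$. For (iii) $\Rightarrow$ (iv), let $\alpha$ be a regular section of $\mathcal{A}$ on $M$ furnished by the hypothesis that $(A,M)$ is good. Pick any point $p_0 \in M$ and a small closed disk $K \subset M$ around $p_0$, and set $u(z) = \int_{p_0}^z \alpha$ for $z \in K$. Then $u$ is a holomorphic $A$-immersion on a neighbourhood of $K$ with $du = \alpha|_K$. Since $K$ is contractible, the restriction of $\alpha$ to any neighbourhood $U$ of $K$ automatically lies in the homotopy class of $du$, verifying condition (iii) of Theorem \ref{t:main-theorem}. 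Invoking the theorem's equivalence (iii) $\Leftrightarrow$ (iv) produces an exact regular section of $\mathcal{A}$ on $M$ with an effective pole at each end of $M$.

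The main implication (iv) $\Rightarrow$ (ii) is proved by a similar but more elaborate application of Theorem \ref{t:main-theorem}. Let $\beta$ be an exact regular section of $\mathcal{A}$ on $M$ with effective poles at each end; writing $\beta = df$, we see that $f: M \to \C^n$ is a regular $A$-immersion whose meromorphic extension to $\overline{M}$ has effective poles at each end, hence is proper. Given a finite set $\Lambda \subset M$ and a map $\phi: \Lambda \to \C^n$, pick pairwise disjoint closed disks $D_p \subset M$ around each $p \in \Lambda$, small enough that $K = \bigcup_{p \in \Lambda} D_p$ is $\Oscr(M)$-convex (possible since $M$ is Stein). Define $u: K \to \C^n$ by $u(z) = \phi(p) + f(z) - f(p)$ for $z \in D_p$; this is a regular (in particular holomorphic) $A$-immersion on $K$ satisfying $u|_\Lambda = \phi$ and $du = \beta|_K$. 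Condition (iii) of Theorem \ref{t:main-theorem} holds with witness $\beta$ itself, so by the theorem's implication (iii) $\Rightarrow$ (ii) there are proper regular $A$-immersions $M \to \C^n$ approximating $u$ on a neighbourhood of $K$ and coinciding with $u$ on any prescribed finite set in $K$. Taking this set to be $\Lambda$ yields a proper regular $A$-immersion extending $\phi$, as required.

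In summary, the corollary is essentially a repackaging of Theorem \ref{t:main-theorem}, the idea being to choose the compact set $K$ as a disjoint union of topological disks, on which the homotopy classification of sections of $\mathcal{A}$ is trivial and condition (iii) of the theorem reduces to the bare existence of a regular section of $\mathcal{A}$ on $M$. The only mild technical point is arranging $\Oscr(M)$-convexity of $K$, handled by standard Stein theory. All substantive content, namely the conversion of a regular section into a proper interpolating $A$-immersion, is supplied by the implication (iii) $\Rightarrow$ (ii) of the theorem.
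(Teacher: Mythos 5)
Your proof is correct and takes essentially the same approach as the paper: both reduce to Theorem~\ref{t:main-theorem} (iii)~$\Rightarrow$~(ii) applied on a holomorphically convex union of small disjoint disks around $\Lambda$, on which the initial $A$-immersion $u$ is built as translates of a primitive of the given regular section, so that $du$ automatically matches the section and the homotopy hypothesis is trivially satisfied. The paper closes the cycle as (ii)~$\Rightarrow$~(iv)~$\Rightarrow$~(i)~$\Rightarrow$~(iii)~$\Rightarrow$~(ii) whereas you route it as (ii)~$\Rightarrow$~(i)~$\Rightarrow$~(iii)~$\Rightarrow$~(iv)~$\Rightarrow$~(ii), but this is only a cosmetic difference.
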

%
%
\begin{corollary}   \label{c:main-corollary}
Under the assumptions of Theorem \ref{t:main-theorem}, the following are equivalent.
\begin{enumerate}[{\rm (i)}]
\item  Every holomorphic $A$-immersion $K\to\C^n$, where $K\subset M$ is compact and holomorphically convex, can be uniformly approximated on $K$ by regular $A$-immersions $M\to\C^n$.
\item  Every holomorphic $A$-immersion $K\to\C^n$, where $K\subset M$ is compact and holomorphically convex, can be uniformly approximated on $K$ by proper regular $A$-immersions $M\to\C^n$ agreeing with the given holomorphic $A$-immersion $K\to\C^n$ on any given finite set in $K$.
\item  $(A,M)$ is very good.
\item  Every homotopy class of continuous sections of $\mathcal A$ on $M$ contains an exact regular section with an effective pole at each end of $M$.
\end{enumerate}
\end{corollary}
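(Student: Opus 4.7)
The implications (ii) $\Rightarrow$ (i) and (iv) $\Rightarrow$ (iii) will be immediate from the definitions. I plan to establish the remaining (i) $\Rightarrow$ (iii), (iii) $\Rightarrow$ (iv), and (iv) $\Rightarrow$ (ii) by reducing each to Theorem \ref{t:main-theorem} combined with one auxiliary input. That input will be a \emph{realization lemma} extracted from the holomorphic theory of \cite{AlarconForstneric2014IM}: since $A$ is Oka, every homotopy class of continuous sections of $\mathcal A$ on any prescribed holomorphically convex compact $K\subset M$ contains $du$ for some holomorphic $A$-immersion $u$ defined on a neighbourhood of $K$. Because $M$ has the homotopy type of a finite bouquet of circles, I will fix a holomorphically convex compact subset $K_\star\subset M$ that is a deformation retract of $M$, so that homotopy classes of continuous sections of $\mathcal A$ on $M$ are detected by their restrictions to (any neighbourhood of) $K_\star$.

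For (i) $\Rightarrow$ (iii), given a homotopy class $[\sigma]$ on $M$, I will use the realization lemma to choose a holomorphic $A$-immersion $u$ on a neighbourhood of $K_\star$ with $[du]=[\sigma|_{K_\star}]$; hypothesis (i) then produces a regular $A$-immersion $\tilde u\colon M\to\C^n$ uniformly approximating $u$ on $K_\star$. Since uniform holomorphic approximation yields $C^1$-approximation in the interior, $d\tilde u|_{K_\star}\simeq du|_{K_\star}$, and the deformation-retract property of $K_\star$ forces $d\tilde u\in[\sigma]$ on all of $M$, so $(A,M)$ is very good. For (iii) $\Rightarrow$ (iv), using the same realization of $[\sigma]$ by some $u$, hypothesis (iii) supplies a regular section $\tau$ of $\mathcal A$ on $M$ in $[\sigma]$, and on a sufficiently small neighbourhood $U$ of $K_\star$ one has $[\tau|_U]=[du|_U]$. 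This verifies condition (iii) of Theorem \ref{t:main-theorem} for $u$, and part (ii) of that theorem provides a proper regular $A$-immersion $\tilde u\colon M\to\C^n$ approximating $u$; its derivative $d\tilde u$ is an exact regular section of $\mathcal A$ on $M$ with an effective pole at each end of $M$, lying in $[\sigma]$ by the same deformation-retract argument. For (iv) $\Rightarrow$ (ii), given a holomorphic $A$-immersion $u\colon K\to\C^n$ on an arbitrary holomorphically convex compact $K$, I will first extend $du$ continuously to a section $\hat\sigma$ of $\mathcal A$ on $M$ agreeing with $du$ near $K$; (iv) then produces an exact regular section with effective poles at the ends in $[\hat\sigma]$, verifying Theorem \ref{t:main-theorem}(iii) for $u$ and letting part (ii) of that theorem deliver the required proper regular approximation with interpolation on any prescribed finite subset.

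The main obstacle will be the realization lemma. Its proof proceeds in two steps: first one homotopes a continuous section of $\mathcal A$ on a neighbourhood of $K_\star$ to a holomorphic section using the Oka property of $A$; then one kills the periods along the generators of $\pi_1(K_\star)$ so that the resulting holomorphic $A$-valued $(1,0)$-form admits a single-valued primitive, which is the desired $A$-immersion. Both steps exploit the flexibility of $A$ beyond mere connectivity and are carried out in \cite{AlarconForstneric2014IM,AlarconCastro-Infantes2019APDE}. The subsidiary continuous extension of $du$ from $K$ to $M$ needed in the proof of (iv) $\Rightarrow$ (ii) is a routine fibre-bundle matter, with the obstructions vanishing because $M$ has the homotopy type of a 1-complex and $A$ is path-connected.
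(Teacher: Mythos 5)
Your proposal is correct and follows essentially the same route as the paper: the obvious implications (ii) $\Rightarrow$ (i) and (iv) $\Rightarrow$ (iii); the use of the fact (which the paper cites from \cite{ForstnericLarusson2019CAG} and \cite{AlarconLarusson2017IJM}) that every homotopy class of sections of $\mathcal A$ is represented by $du$ for a holomorphic $A$-immersion $u$; and an invocation of Theorem \ref{t:main-theorem} to close the cycle. The only cosmetic difference is in the last step: where you extend $du$ continuously (via Lemma \ref{lem:extension}) and feed the homotopy class into condition (iv) before applying Theorem \ref{t:main-theorem}, the paper's proof of (iii) $\Rightarrow$ (ii) first extends $u$ itself to a holomorphic $A$-immersion $v\colon M\to\C^n$ using \cite[Theorem 1.3]{AlarconCastro-Infantes2019APDE} and then applies Theorem \ref{t:main-theorem} to $v$; both devices achieve the same purpose of matching the germ of the class near $K$ with a global class on $M$.
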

For any open Riemann surface $M$ and any punctured cone $A\subset\C^n$, $n\geq 3$, that is Oka and not contained in a hyperplane, Alarc\'on and Forstneri\v c proved a general existence result for proper holomorphic $A$-embeddings $M\to\C^n$ in \cite{AlarconForstneric2014IM}.  For $n=2$, it is an open question whether every open Riemann surface admits a proper holomorphic embedding into $\C^2$ \cite{Forster1970,BellNarasimhan1990}. The problem seems very difficult even for smooth affine curves; see \cite[Sections 9.10--9.11]{Forstneric2017E}.  In the algebraic setting, it is well known that every smooth affine curve regularly embeds in $\C^3$, while most of them do not regularly embed in $\C^2$.  Except for the large cones, even for the punctured null quadrics in Example \ref{ex:}(b), the following question remains open.

\begin{question}
Let $A$ and $M$ be as in Theorem \ref{t:main-theorem} with $A\neq \C_*^n$ and assume that the pair $(A,M)$ or the cone $A$ is good or even very good.  Is there a proper regular $A$-embedding $M\to\C^n$?
\end{question}
A main difficulty in extending the results in this paper to embeddings is that one cannot use the standard transversality method that has proved useful in the holomorphic category (see \cite[Section 6]{AlarconForstneric2014IM}).  Indeed, this method would allow us to eliminate the self-intersections in a compact piece of the affine curve, but to make it embedded one would need to apply it in a recursive way, which does not seem compatible with our approach.

%
%
Our method of proof also gives results analogous to Theorem \ref{t:main-theorem} for directed harmonic maps into $\R^n$, $n\ge 3$, including conformal minimal immersions.  We explain this in Section \ref{sec:harmonic}.  In fact these results follow from a more general approximation theorem for regular sections of $\mathcal A$ with control on their periods, which we state and prove in Section \ref{sec:sections} as Theorem \ref{t:sections}.  The following result, saying in particular that the periods of a regular section of $\mathcal A$ on $M$ can be prescribed arbitrarily whenever the pair $(A,M)$ is good, is an example of the consequences of Theorem \ref{t:sections}.  As far as we know, the result is new even for the \lq\lq large cones\rq\rq\ in Example \ref{ex:}(a).

%
%
\begin{theorem}\label{t:cohomology}
Let $A$, $M$, and $\mathcal A$ be as in Theorem \ref{t:main-theorem} and assume that $t_0$ is a regular section of $\mathcal A$ on $M$. Then for any group homomorphism $\mathcal F:H_1(M,\Z)\to\C^n$ there is a regular section $t$ of $\mathcal A$ on $M$ homotopic to $t_0$ such that $t$ has an effective pole at each end of $M$ and $\int_C t=\mathcal F(C)$ for every loop $C$ in $M$.

In particular, the following hold for any pair $(A,M)$ as in Theorem \ref{t:main-theorem}.
\begin{enumerate}[A]
\item[$\bullet$] $(A,M)$ is good if and only if every class in the cohomology group $H^1(M,\C^n)$ contains a regular section of $\mathcal A$ on $M$ with an effective pole at each end of $M$. 
\item[$\bullet$] $(A,M)$ is very good if and only if every class in the cohomology group $H^1(M,\C^n)$ contains a regular section of $\mathcal A$ on $M$ with an effective pole at each end of $M$ in each homotopy class of continuous sections of $\mathcal A$ on $M$.
\end{enumerate}
\end{theorem}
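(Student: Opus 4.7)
The plan is to deduce Theorem \ref{t:cohomology} from Theorem \ref{t:sections}, the general approximation theorem for regular sections of $\mathcal A$ with control on the periods announced in the excerpt. The underlying identification is the de Rham isomorphism $H^1(M,\C^n) \cong \mathrm{Hom}(H_1(M,\Z),\C^n)$ given by $t\mapsto (C\mapsto \int_C t)$: since sections of $\mathcal A$ are holomorphic $(1,0)$-forms on a Riemann surface they are automatically $d$-closed, so prescribing the periods of $t$ is exactly the same as prescribing its class in $H^1(M,\C^n)$.

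For the main statement, fix a holomorphically convex compact subset $K\subset M$ that carries an embedded bouquet of circles realizing a basis of $\pi_1(M)$, and apply Theorem \ref{t:sections} to $t_0$ on $K$ with target periods $\mathcal F$ and the effective-pole condition at each end. This yields a regular section $t$ of $\mathcal A$ on $M$ that uniformly approximates $t_0$ on $K$, satisfies $\int_C t=\mathcal F(C)$ for every loop $C$, and has an effective pole at each end of $M$. Since $K$ supports a full set of $\pi_1$-generators, close enough approximation on $K$ forces $t$ and $t_0$ to induce the same homomorphism $\pi_1(M)\to \pi_1(A)$; by Remark \ref{rem:serre}, this homomorphism is a complete homotopy invariant for continuous sections of $\mathcal A$, so $t$ and $t_0$ are homotopic.

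The two bulleted consequences are then formal bookkeeping. If $(A,M)$ is good, pick any regular section $t_0$ and apply the main statement to every $\mathcal F$: each class in $H^1(M,\C^n)$ is represented by a regular section of $\mathcal A$ on $M$ with effective poles at each end. The converse is immediate, since the existence of any such representative supplies a regular section, hence goodness. For the second bullet, if $(A,M)$ is very good, each homotopy class of continuous sections contains a regular representative $t_0$, and the main statement applied with this $t_0$ produces, for every $\mathcal F$, a regular section with effective poles lying simultaneously in the given homotopy class and in the prescribed cohomology class. The converse is again immediate, since then every homotopy class contains a regular section.

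The hard part is hidden in Theorem \ref{t:sections}, whose proof we plan to carry out in Section \ref{sec:sections}. That argument will combine Theorem \ref{t:main-theorem} with a period-dominating spray of $A$-sections based at $t_0$, built by localized perturbations concentrated on small arcs that cross a given basis of cycles; applying the algebraic Oka approximation parametrically and invoking an implicit-function (or fixed-point) argument will then allow us to select a parameter for which the periods of the resulting regular section match $\mathcal F$ exactly. The main technical difficulty will be to execute this spray construction within the directedness constraint imposed by $\mathcal A$ (using that $A$ spans $\C^n$ because it is not contained in a hyperplane), while simultaneously controlling the divisor at infinity so as to guarantee effective poles at every end.
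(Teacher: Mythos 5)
There is a genuine gap in your deduction of the main statement from Theorem~\ref{t:sections}, and it lies in the compatibility hypothesis \eqref{eq:periods-s}. That theorem requires that the homomorphism $\mathcal F$ you feed in already agrees with the periods of the input section $s$ on every loop in a neighbourhood of $K$. If, as you propose, you take $K$ to be a holomorphically convex compact set carrying an embedded bouquet realizing a basis of $H_1(M,\Z)$ and set $s=t_0$ near $K$, then \eqref{eq:periods-s} forces $\mathcal F(C)=\int_C t_0$ for every cycle $C$, i.e.\ $\mathcal F$ is completely determined by $t_0$. You cannot freely prescribe $\mathcal F$. Conversely, if you shrink $K$ to kill its homology so that \eqref{eq:periods-s} becomes vacuous, then the approximation of $s$ on $K$ no longer pins down the homotopy class of the output $t$ on $M$, so the conclusion ``$t$ homotopic to $t_0$'' is lost. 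Either way the argument does not close.

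What is missing is a period-adjustment step \emph{before} invoking Theorem~\ref{t:sections}. The paper's proof of Theorem~\ref{t:cohomology-2} does exactly this: writing $f_0=t_0/\theta$ for a nowhere-vanishing holomorphic $1$-form $\theta$, it first deforms $f_0$ on an embedded bouquet $\Gamma=C_1\cup\dots\cup C_r$ (keeping a basepoint fixed) to a continuous map with the prescribed periods $\mathcal F(C_i)$, using Lemma~\ref{lem:paths}; then it approximates this on $\Gamma$ by a holomorphic map $f'\colon M\to A$ with the same periods via Lemma~\ref{lem:3.3.1}, and $f'$ remains homotopic to $f_0$. Only after setting $s=f'\theta$, a holomorphic section on all of $M$ with periods $\mathcal F$ and the right homotopy type, does one apply Theorem~\ref{t:sections} with a compact domain $K$ that is a strong deformation retract of $M$; now \eqref{eq:periods-s} holds for free, the homotopy class is controlled, and the theorem yields a regular section with effective poles. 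Your reduction of the two bulleted statements to the main statement is fine; it is the passage from Theorem~\ref{t:sections} to the main statement that needs the interpolation lemmas.
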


See Theorem \ref{t:cohomology-2} for a slight extension of this result. Theorem \ref{t:cohomology} strengthens the implications (iii) $\Rightarrow$ (iv) in Theorem \ref{t:main-theorem} and Corollaries \ref{c:second-corollary} and \ref{c:main-corollary}.  There is a long history of results about cohomology classes containing representatives with special properties.  The holomorphic analogue of Theorem \ref{t:cohomology} is \cite[Corollary 2(a)]{AlarconLarusson2017IJM}.  It does not require the assumption that $(A, M)$ is good or any similar hypothesis.
%
%
%


\section{Good and very good pairs and cones}
\label{sec:cones}

\noindent
In this section we first prove Proposition \ref{pro:sufficient}, providing sufficient conditions for a cone to be good or for a pair to be very good in the sense of Definition \ref{def:good}. Then we show that the cones presented in Example \ref{ex:} are very good.
%
%
\begin{proof}[Proof of Proposition \ref{pro:sufficient}]
(a)  Let $f:\P^1\to \P^{n-1}$, $[z,w]\mapsto [q_1(z,w), \ldots, q_n(z,w)]$, be a rational curve of degree 1 in $Y$.  Here, $q_1,\ldots,q_n$ are homogeneous polynomials of degree 1.  Let $\omega$ and $\eta$ be regular forms on $M$ with no common zeros (such a pair exists by Riemann-Roch).  Then $(q_1(\omega,\eta),\ldots,q_n(\omega,\eta))$ is a regular section of~$\mathcal A$.  If $f$ has degree 2, then we apply the same argument to a pair of regular sections on $M$ with no common zeros of a square root of $T^*\overline M$.

(b)  Since $TM$ is algebraically trivial, regular sections of $\mathcal A$ may be identified with regular maps $M\to A$.  We need to show that every homomorphism $\pi_1(M)\to\pi_1(A)$ is induced by a regular map (see Remark \ref{rem:serre}).  Consider the piece $\pi_1(\C^*) \to \pi_1(A) \to \pi_1(Y)=0$ of the long exact sequence of homotopy groups associated to the projection $A\to Y$.  By the universal property of the free group $\pi_1(M)$, every homomorphism $\pi_1(M)\to \pi_1(A)$ factors through the homomorphism $\pi _1(\C^*) \to \pi_1(A)$.  Since $M$ has genus 0, every homomorphism $\pi_1(M)\to \pi_1(\C^*)$ is induced by a regular map $M\to \C^*$ and the proof is complete.

(c)  $\mathcal A$ has a regular section corresponding to a constant map $M\to A$.
\end{proof}

\subsection{The large cone}\label{ss:large}
We begin by proving that the large cone $A=\C_*^n$, $n\ge 2$, is very good.  In this case, an $A$-immersion is nothing but an immersion.  We give a self-contained proof, not relying on Theorem \ref{t:main-theorem} (see Remark \ref{rem:intro}(a)).  We first show that condition (ii) in Theorem \ref{t:main-theorem} always holds for the large cone, even including jet interpolation.
%
%
\begin{proposition}\label{pro:large}
Let $M$ be a smooth affine curve and $K$ be a holomorphically convex compact subset $K$ of $M$.  Then every holomorphic immersion $K\to\C^n$, $n\ge 2$, may be approximated uniformly on a neighbourhood of $K$ by proper regular immersions $M\to\C^n$ agreeing with the given immersion to any given finite order at any given finite subset of $K$.  Furthermore, we can choose the approximating regular immersions such that all their component functions are proper.
\end{proposition}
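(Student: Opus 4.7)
The plan is to build the desired proper regular immersion from $u$ by three successive small perturbations: first a regular approximation of $u$ on $K$ via Runge with jet interpolation, then a modification making each component of the resulting map proper, and finally a generic perturbation eliminating residual common zeros of the derivative.

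The first two steps rest on standard tools for smooth affine curves. By the algebraic Runge approximation theorem for smooth affine curves with jet interpolation (a consequence of Behnke--Stein combined with the density of regular functions in $\mathcal{O}(M)$ uniformly on compacta, sharpened by Weierstrass-type interpolation at a finite set), I would obtain a regular map $\tilde u:M\to\C^n$ approximating $u$ arbitrarily closely on $K$ and matching $u$ to the prescribed finite jet order at the given finite set $\Lambda\subset K$. To upgrade each component to be proper, I would invoke Riemann--Roch on $\overline M$ to produce a regular function $\varphi$ on $M$ with effective pole of sufficiently high order at every end, and a regular function $\chi$ on $M$ vanishing to high order on $K\cup\Lambda$ while being nonvanishing at every end; the product $\chi\varphi$ is then small on $K$, has a high-order jet vanishing at $\Lambda$, and has effective poles at every end. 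Replacing $\tilde u_j$ by $\hat u_j=\tilde u_j+\epsilon_j\chi\varphi$ for small nonzero scalars $\epsilon_j$ preserves the approximation and jet conditions while guaranteeing that each component has an effective pole at every end of $M$.

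The decisive step is the third. Each $d\hat u_j$ is now a nontrivial meromorphic $1$-form on $\overline M$ (since $\hat u_j$ has poles at the ends), so its zero set on $M$ is finite; hence the common zero set $Z=\bigcap_j\{d\hat u_j=0\}\cap M$ is finite, and it is disjoint from a neighbourhood of $K$ once the Step 1 approximation is sharp enough. To eliminate $Z\subset M\setminus K$, I would add a further correction $\delta$ to a single component, say $\hat u_1$. The $\delta$ must satisfy: small $C^1$-norm on $K$, vanishing to the required order at $\Lambda$, an effective pole at every end, $d\delta(z)\ne 0$ at every $z\in Z$, and $d\delta(z)\ne -d\hat u_1(z)$ at every point $z\in M$ where $d\hat u_2,\ldots,d\hat u_n$ vanish simultaneously (the latter to prevent new common zeros of the derivative from appearing). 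These constraints amount to finitely many codimension-one avoidance conditions on the finite jets of $\delta$ at finitely many points, so by Riemann--Roch on $\overline M$ such a $\delta$ can be constructed.

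The main obstacle throughout is to coordinate the three modifications so that each preserves the achievements of the previous ones. What makes this workable for the large cone is the simple connectedness of $\C_*^n$ for $n\ge 2$, which removes any homotopy-theoretic obstruction present in the general setting, together with the algebraic flexibility on $\overline M$ afforded by Riemann--Roch.
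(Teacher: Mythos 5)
Your proposal is essentially correct and does establish the proposition, but via a genuinely different route from the paper's. Before comparing, one wording slip: you write that $\chi$ should ``vanish to high order on $K\cup\Lambda$,'' which, read literally (since $\Lambda\subset K$ and $K$ has interior), would force $\chi\equiv 0$; what you clearly intend, and what your subsequent sentence confirms, is that $\chi\varphi$ should be \emph{small} on $K$ and vanish to high order at $\Lambda$, which can be arranged by scaling. Also, the ``each component has an effective pole'' conclusion in Step~2 requires \emph{generic} small $\epsilon_j$, not every small nonzero $\epsilon_j$, to rule out cancellation with poles that $\tilde u_j$ may already carry; this is a cosmetic adjustment and you use the same caveat implicitly in Step~3.

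The substantive difference in approach is this. The paper works one component at a time: it first produces a regular $v_1$ approximating $u_1$, made large on a finite set near the ends so as to force propriety via the maximum modulus principle; it then observes that the critical set $B$ of $v_1$ on $M\setminus L$ is finite, encloses $B$ in discs $\Delta$, and approximates $u_2$ on $L\cup\Delta$ in such a way that $v_2$ is immersive on $\Delta\supset B$. Immersivity of $(v_1,v_2)$ then follows by covering the critical locus of one component by the immersion locus of the other, and propriety of each $v_k$ falls out of the same maximum modulus comparison. Your proposal instead applies a uniform perturbation $\epsilon_j\chi\varphi$ to all components to force propriety directly, and then removes the finite common zero set of the differentials $d\hat u_j$ by a single generic small perturbation $\delta$ of one component, solving finitely many open avoidance conditions via Riemann--Roch; the scaling $\delta=\lambda\delta_0$ with $d\delta_0\neq 0$ on the bad set makes the conditions compatible with smallness on $K$ and jet interpolation. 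This transversality-flavoured argument is cleaner to state for general $n$ and isolates propriety and immersivity into separate, independent corrections, whereas the paper's interleaved construction is tailored to the successive-component picture and packages propriety into the same estimates that govern the critical set. Both proofs rest on the same foundational input (Riemann--Roch on $\overline M$ and the finiteness of zero sets of nontrivial meromorphic forms), and both are correct; yours gives a slightly more modular organization, while the paper's gives somewhat more explicit control of where the ``bad'' sets sit (inside the discs $\Delta$).
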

\begin{proof}
 For simplicity of exposition we shall assume that $n=2$.  The proof for $n\ge 3$ follows the same lines.  So assume that $K\subset M=\overline M\setminus\{p_1,\ldots,p_m\}$ is a holomorphically convex compact subset and $u=(u_1,u_2):K\to \C^2$ is a holomorphic immersion. Also, for the jet interpolation condition, let $\Lambda\subset K$ be a finite set and $d\ge 1$ be an integer. Choose a smoothly bounded connected compact domain\footnote{It is customary to call a nonempty set in a topological space a compact domain if it is compact and is the closure of a connected open set.  Lacking a better term, in this paper we also refer to the union of finitely many mutually disjoint compact domains as a compact domain.} $L\supset K$ in $M$ such that $\overline M\setminus \overset\circ L = D_1\cup\cdots\cup D_m$, where $D_1,\ldots, D_m$ are mutually disjoint smoothly bounded closed discs centred at $p_1,\ldots, p_m$, respectively.  By the classical Runge theorem with jet interpolation we may approximate $u$ uniformly on a neighbourhood of $K$ by a holomorphic map $L\to\C^2$ agreeing with $u$ to order $d$ at all points of $\Lambda$.  By general position, this map can be chosen to be an immersion on a neighbourhood of $L$.  Thus we may assume without loss of generality that $u$ extends to $L$ as a holomorphic immersion $u=(u_1,u_2):L\to \C^2$. 
 
Fix a finite set $F=\{q_1,\ldots,q_m\}$ with $q_i\in \mathring D_i\setminus\{p_i\}$ for $i=1,\ldots,m$.  By Royden's Runge theorem for regular functions \cite[Theorem 10]{Royden1967JAM}, we may approximate $u_1$ uniformly on a neighbourhood of $L$ by a regular function $v_1:M\to\C$ agreeing with $u_1$ to order $d$ at all points of $\Lambda$ and satisfying
\begin{equation}\label{eq:v1-proper}
	\min\{|v_1(p)|: p\in F\}>\max\{|v_1(p)|: p\in bL\}.
\end{equation}
Assuming that the approximation is close enough, the holomorphic map $(v_1,u_2):L\to\C^2$ is still an immersion on a neighbourhood of $L$.  Let $B$ denote the set of critical points of $v_1$ on $M\setminus L$.  Since $v_1$ is regular, $B$ is finite.  Also choose for $i=1,\ldots,m$ a smoothly bounded closed disc $\Delta_i\subset \mathring D_i\setminus\{p_i\}$ such that $B\subset\Delta=\Delta_1\cup\cdots\cup \Delta_m$.  Extend $u_2:L\to\C$ to a holomorphic map $u_2:L\cup\Delta\to\C$ such that $u_2$ is an immersion on $\Delta$ and
\begin{equation}\label{eq:MP2}
	\min\{|u_2(p)|: p\in \Delta\}>\max\{|u_2(p)|: p\in bL\}.
\end{equation}
(Recall that $L\cap \Delta=\varnothing$ and $L$ and $\Delta$ are compact.)  Again by \cite[Theorem 10]{Royden1967JAM}, we may approximate $u_2$ uniformly on a neighbourhood of $L\cup\Delta$ by a regular function $v_2$ on $M$ agreeing with $u_2$ to order $d$ at all points of $\Lambda$.  If the approximation is close enough, then $v=(v_1,v_2):M\to\C^2$ is still an immersion on a neighbourhood of $L$, while $v_2:M\to\C$ is an immersion on a neighbourhood of $\Delta$.  Since $v_1:M\to\C$ is an immersion on $M\setminus (L\cup B)$ and $B\subset \Delta$, we infer that $v:M\to\C^2$ is a regular immersion.  Note that $v$ is close to $u$ uniformly on a neighbourhood of $K$ and agrees with $u$ to order $d$ at all points of $\Lambda$.  Finally, assuming that $v_2$ is sufficiently close to $u_2$ uniformly on $L\cup \Delta$, condition \eqref{eq:MP2} ensures that 
\[ \min\Big\{|v_2(p)|: p\in \Delta=\bigcup_{i=1}^m\Delta_i\Big\}>\max\Big\{|v_2(p)|: p\in bL=\bigcup_{i=1}^m bD_i\Big\}.  
\]
We claim that $v_2$ has a pole at $p_i$ for all $i=1,\ldots,m$, so $v_2:M\to\C$ is a proper map.  Indeed, otherwise, $v_2$ would be holomorphic on the smoothly bounded closed disc $D_i$, violating the maximum modulus principle since $\Delta_i\subset\mathring D_i$ and $\min\{|v_2(p)|: p\in \Delta_i\}>\max\{|v_2(p)|: p\in bD_i\}$.  Using \eqref{eq:v1-proper}, the same argument shows that $v_1:M\to\C$ is a proper map. In particular, $v:M\to \C^2$ is proper.
\end{proof}

%
%
\begin{corollary}\label{co:large-very-good}
If $A=\C_*^n$, $n\ge 2$, $M$ is a smooth affine curve, and $\mathcal A$ is the subbundle of $(T^*M)^{\oplus n}$ defined by $A$, then conditions {\rm (i)}--{\rm (iv)} in Corollary \ref{c:main-corollary} hold. In particular, $A$ is very good in the sense of Definition \ref{def:good}.
\end{corollary}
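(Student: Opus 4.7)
The plan is to deduce all four conditions of Corollary \ref{c:main-corollary} directly from Proposition \ref{pro:large} together with a few elementary topological observations; since the general equivalence of (i)--(iv) is only established later in Section \ref{sec:proofs}, the approach here is to verify each condition individually. Condition (ii) is exactly what Proposition \ref{pro:large} delivers (in fact in a slightly stronger form, with jet interpolation and with each component function proper), and condition (i) is an immediate weakening of (ii).

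For (iii), I would use the fact that $A = \C_*^n$ deformation-retracts onto the sphere $S^{2n-1}$ and is therefore simply connected for $n \ge 2$. By Remark \ref{rem:serre}, since $TM$ is holomorphically trivial, homotopy classes of continuous sections of $\mathcal A$ are in bijection with homotopy classes of continuous maps $M \to A$; and since $\pi_1(A) = 0$ and $M$ has the homotopy type of a finite bouquet of circles, every continuous map $M \to A$ is nullhomotopic. Hence $\mathcal A$ has a single homotopy class of continuous sections, so in order to conclude that $(A, M)$ is very good it suffices to exhibit one regular section of $\mathcal A$ on $M$. Such a section is provided by $du$, where $u : M \to \C^n$ is any regular $A$-immersion obtained from Proposition \ref{pro:large}.

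For (iv), I would apply Proposition \ref{pro:large} to produce a \emph{proper} regular $A$-immersion $v = (v_1,\ldots,v_n) : M \to \C^n$. Then $dv$ is exact and regular. If $z$ is a local holomorphic coordinate near an end $p_i$ with $z(p_i) = 0$, properness of $v$ means each $v_j$ has a pole of some order $k_j \ge 1$ at $p_i$, so $dv_j = v_j'(z)\,dz$ has a pole of order $k_j + 1 \ge 2$ there; hence $dv$ has an effective pole at each end. Since there is only one homotopy class of continuous sections of $\mathcal A$, this $dv$ represents it, and (iv) follows.

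No step poses a real obstacle, since all the substantive work is already contained in Proposition \ref{pro:large}; the only additional ingredients are the simple connectivity of $\C_*^n$ for $n \ge 2$ and the elementary observation that differentiation raises the order of a pole by one.
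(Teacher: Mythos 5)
Your proof is correct, but you take a different (and somewhat more elementary) route than the paper for the part beyond Proposition \ref{pro:large}. The paper also derives (ii) from Proposition \ref{pro:large} and gets (i), (iii) as formal consequences, but establishes (iv) by quoting \cite[Theorems 5.3 and 5.4]{ForstnericLarusson2019CAG} to the effect that \emph{every} holomorphic section of $\mathcal A$ over $M$ may be deformed to an exact one $du$; one then feeds $u$ into Proposition \ref{pro:large}. That argument is agnostic about $\pi_1(A)$ and is in the same spirit as the implication (iii) $\Rightarrow$ (iv) in the general Theorem \ref{t:main-theorem}. You instead observe that $\C_*^n \simeq S^{2n-1}$ is simply connected for $n\ge 2$, so by Remark \ref{rem:serre} there is a single homotopy class of continuous sections of $\mathcal A$; then (iii) reduces to the existence of one regular section, and (iv) to the existence of one exact regular section with effective poles, both of which Proposition \ref{pro:large} (including the statement that each component function can be taken proper) supplies directly, using the elementary fact that differentiating raises the pole order by one. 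Your version avoids the external citation and is fully self-contained, at the cost of being specific to simply connected cones; the paper's version is written to foreshadow the general mechanism used later. Both are sound, and the computations in your argument (homotopy type of $\C_*^n$, pole order of $dv_j$) are all correct.
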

\begin{proof}
Proposition \ref{pro:large} implies (ii), while it is obvious that (ii) implies (i) and that (iv) implies (iii). So it remains to check (iv).  For this, in view of Proposition \ref{pro:large}, it suffices to show that every homotopy class of continuous or, equivalently, holomorphic sections of $\mathcal A$ over $M$ contains $du$ for some holomorphic immersion $u:M\to\C^n$.  By \cite[Theorems 5.3 and 5.4]{ForstnericLarusson2019CAG} (see also \cite[Theorem 1]{AlarconLarusson2017IJM}), every holomorphic section of $\mathcal A$ over $M$ can be deformed to an exact holomorphic section, that is, a section of the form $du$.
\end{proof}

%
%
\subsection{The null quadric}\label{ss:null}
This is the cone
\begin{equation}\label{eq:null}
	A=\{(z_1, \ldots, z_n)\in\C_*^n: z_1^2+\cdots+ z_n^2=0\},\quad n\ge 3.
\end{equation}
This cone is algebraically elliptic as seen in \cite[Proposition 1.15.3]{AlarconForstnericLopez2021}.  A holomorphic immersion $M\to\C^n$ directed by $A$ is called a null curve and its real part $M\to\R^n$ is a conformal minimal immersion.  Conversely, every conformal minimal immersion $M\to\R^n$ is the real part of a holomorphic null curve on any simply connected domain in $M$.  The natural counterpart of a regular function in the theory of minimal surfaces is a complete minimal surface of finite total curvature.  Indeed, if $M$ is an open Riemann surface and $X:M\to\R^n$ is a complete conformal minimal immersion of finite total curvature, then $M$ is a smooth affine curve and the $(1,0)$-part of the exterior derivative of $X$ is a regular section of $\mathcal A$ on $M$ with an effective pole at each end of $M$.  (See the classical surveys \cite{Osserman1986,BarbosaColares1986LNM,Yang1994MA} or the recent monograph \cite{AlarconForstnericLopez2021}, in particular Chapter 4.)  For regular null curves and complete minimal surfaces of finite total curvature, Theorem \ref{t:main-theorem} was recently proved by Alarc\'on and L\'opez in \cite{AlarconLopez2022APDE} (see \cite{Lopez2014TAMS, AlarconCastro-InfantesLopez2019CVPDE} for the case $n=3$). 
By the results in \cite{AlarconLopez2022APDE}, the punctured null quadric $A$ \eqref{eq:null} is very good.  The proofs in \cite{Lopez2014TAMS,AlarconCastro-InfantesLopez2019CVPDE,AlarconLopez2022APDE}, relying mainly on the theory of Riemann surfaces, are much more involved than the arguments here and heavily use the special geometry of the null quadric.  It does not seem that these proofs could be adapted to more general families of regular directed immersions.  For completeness, we give a simple proof that the punctured null quadric is very good. 
(This can also be seen as an immediate corollary of the results in \cite{Lopez2014TAMS,AlarconLopez2022APDE}.)  Thus, if $M$ is a smooth affine curve and $\mathcal A$ is the subbundle of $(T^*M)^{\oplus n}$ defined by $A$, then conditions (i)--(iv) in Corollary \ref{c:main-corollary} hold.
%
%
\begin{proposition}\label{pro:null-very-good}
The punctured cone $A=\{(z_1, \ldots, z_n)\in\C_*^n: z_1^2+\cdots+ z_n^2=0\}$, $n\ge 3$,  is very good in the sense of Definition \ref{def:good}.
\end{proposition}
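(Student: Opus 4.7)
The plan is to verify very goodness by combining existence (goodness) with a homotopy-theoretic argument separating the cases $n\ge 4$ and $n=3$. The smooth projective quadric $Y=\{z_1^2+\cdots+z_n^2=0\}\subset\P^{n-1}$ contains a rational curve of low degree in every case: a smooth conic for $n=3$ (since $Y$ is itself such a conic in $\P^2$) and a line for $n\ge 4$ (for instance $\{[s\!:\!is\!:\!t\!:\!it\!:\!0\!:\!\cdots\!:\!0]\}\subset Y$). So Proposition \ref{pro:sufficient}(a) provides goodness of $A$ in all cases. For $n\ge 4$, $Y=Q_{n-2}$ is a simply connected smooth projective quadric of complex dimension $\ge 2$ and the Chern class of $\mathcal O_Y(-1)=\mathcal O_{\P^{n-1}}(-1)|_Y$ generates $H^2(Y,\Z)\cong\Z$; hence the homotopy exact sequence of the $\C^*$-bundle $A\to Y$ forces $\pi_1(A)=0$, and Remark \ref{rem:serre} upgrades goodness to very goodness in this range.

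For $n=3$ the Veronese parametrization $[u\!:\!v]\mapsto[u^2-v^2\!:\!i(u^2+v^2)\!:\!-2uv]$ identifies $A$ with the complement of the zero section in $\mathcal O_{\P^1}(-2)$, yielding $\pi_1(A)=\Z/2$ and hence placing homotopy classes of continuous sections of $\mathcal A$ on $M$ in bijection with $H^1(M,\Z/2)$. The plan is to realize every class by a regular section via the transitive $\mathrm{SO}(3,\C)$-action on $A$ preserving $\sum z_i^2$: given a regular section $t_0$ supplied by goodness, for any regular map $T\colon M\to\mathrm{SO}(3,\C)$ the product $T\cdot t_0$ is again a regular section, and a routine computation shows that its homotopy class equals $[t_0]+[T]\in H^1(M,\Z/2)$. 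Here one uses that the orbit map $\mathrm{SO}(3,\C)\to A$ is a $\pi_1$-isomorphism, which holds because the stabilizer of a null vector is the unipotent subgroup $\C^+$, hence simply connected.

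The main obstacle is then to produce, for every $c\in H^1(M,\Z/2)$, a regular map $T\colon M\to\mathrm{SO}(3,\C)=\mathrm{SL}_2(\C)/\{\pm I\}$ with $[T]=c$. My plan is to pass to the étale double cover $\pi\colon\widetilde M\to M$ classified by $c$, with deck involution $\sigma$: such a $T$ is the same as a $\sigma$-equivariant regular map $\widetilde T\colon\widetilde M\to\mathrm{SL}_2(\C)$ for the sign action, equivalently a $2\times 2$ matrix of $\sigma$-anti-invariant regular functions on $\widetilde M$ with determinant $1$. Anti-invariant regular functions on $\widetilde M$ correspond to global regular sections of the $2$-torsion line bundle $L_c$ on $M$ classified by $c$; two generators $s_1,s_2$ of $L_c$ without common zero on $M$ exist because $\mathcal O(M)$ is a Dedekind domain and every rank-one projective module over such a ring is generated by two elements; and the remaining entries $s_3,s_4$ are then supplied by applying the vanishing of $H^1$ for coherent sheaves on the affine curve $M$ to the surjective sheaf map $L_c\oplus L_c\twoheadrightarrow\mathcal O_M$, $(s_4,s_3)\mapsto s_1s_4-s_2s_3$, which gives $s_1s_4-s_2s_3=1$ as required.
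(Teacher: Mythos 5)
Your proof is correct and, for the case $n=3$, takes a genuinely different and arguably more conceptual route than the paper's. The reduction to goodness plus a homotopy-theoretic argument, and the treatment of $n\ge 4$ via simple connectivity of $A$ and Remark \ref{rem:serre}, agree with the paper. (One small inaccuracy: $H^2(Y,\Z)\cong\Z$ fails for $n=4$, where $Y\cong\P^1\times\P^1$ has $H^2\cong\Z^2$; the argument survives because the relevant point is only that $\pi_2(Y)\to\pi_1(\C^*)$ is surjective, which follows from the presence of a line on which $\mathcal O_Y(-1)$ has degree $-1$.)

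For $n=3$, the paper works with the isomorphic cone $A'=\{z_1z_2=z_3^2\}$ and gives a hands-on approximation argument: writing a holomorphic section near $K$ as $(f_1^2\omega,f_2^2\omega,f_1f_2\omega)$ via spinorial equivalence to a regular $1$-form $\omega$, then replacing $f_1,f_2$ by regular functions via two careful applications of Royden's Runge theorem, managing the zeros of $\omega$ and of the approximants along the way. You instead observe that the transitive $\mathrm{SO}(3,\C)$-action on $A$ lets you shift the homotopy class of one regular section $t_0$ by the class of any regular map $T\colon M\to\mathrm{SO}(3,\C)$ (the orbit map being a $\pi_1$-isomorphism with contractible stabilizer $\C^+$), and then realize every class in $H^1(M,\Z/2)$ by such a $T$ by passing to the double cover, interpreting the lift to $\mathrm{SL}_2(\C)$ as a matrix with entries in the $2$-torsion line bundle $L_c$, and solving $s_1s_4-s_2s_3=1$ by the vanishing of $H^1$ on the affine curve. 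Both arguments are sound; the paper's buys approximation of holomorphic sections by regular ones as a by-product, while yours produces only existence of a regular section in each homotopy class, which is precisely what the definition of ``very good'' requires. Your twisting-by-a-group-valued-map idea exploits the special structure of the null quadric (a homogeneous space of $\mathrm{SO}(3,\C)$ with simply connected stabilizer), so it is not obviously portable to other cones, but neither is the paper's spinorial argument, which relies just as heavily on the quadric's geometry.
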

\begin{proof}
By Proposition \ref{pro:sufficient}(a), $A$ is good. If $n\geq 4$, then $A$ is simply connected and hence very good by Remark \ref{rem:serre}.  In case $n=3$, $A$ is 
isomorphic to 
\[
	A'=\{z=(z_1,z_2,z_3)\in\C_*^3 : z_1z_2=z_3^2\}
\] 
and is not simply connected (its fundamental group is $\Z_2$: see \cite[Equation (8.3)]{AlarconForstneric2018Crelle}).  We will prove that the pair $(A',M)$ is very good for every smooth affine curve $M$.  Denote by $\mathcal A'$ the subbundle of $(T^*M)^{\oplus 3}$ defined by $A'$. Since $A'$ is an Oka manifold, every homotopy class of continuous sections of $\mathcal A'$ on $M$ contains a holomorphic section. Thus it suffices to see that for any holomorphically convex compact set $K\subset M$, which is a strong deformation retract of $M$, and for any holomorphic section $\eta$ of $\mathcal A'$ on a neighbourhood of $K$, we can approximate $\eta$ on a neighbourhood of $K$ by a regular section of $\mathcal A'$ on $M$. 
%
%
For this, note that $\eta=(\eta_1,\eta_2,\eta_3)$, where each $\eta_i$ is a holomorphic 1-form on a neighbourhood of $K$ such that 
\begin{equation}\label{eq:eta1eta2}
	\eta_1\eta_2=\eta_3^2 
\end{equation}
and $\eta_1$ and $\eta_2$ have no common zeros on $K$. 
By \eqref{eq:eta1eta2}, $\eta_1$ and $\eta_2$ are spinorially equivalent on a neighbourhood of $K$, meaning that there is a meromorphic function $f$ on a neighbourhood of $K$ such that $\eta_1=f^2\eta_2$. Choose a regular 1-form $\omega$ on $M$ satisfying the following conditions (see \cite[Lemma 3.2]{AlarconCastro-InfantesLopez2019CVPDE}).
\begin{enumerate}[(a)]
\item $\omega$ has no zeros on $K$.
\item $\omega$ is spinorially equivalent to $\eta_1$, hence also to $\eta_2$, on a neighbourhood of $K$.
\end{enumerate}
By (b), there is a meromorphic function $f_i$ on a neighbourhood of $K$ such that 
\begin{equation}\label{eq:etai}
	\eta_i=f_i^2\omega,\quad i=1,2.
\end{equation} 
Note that (a) ensures that $f_i$ is in fact holomorphic on a neighbourhood of $K$, $i=1,2$.  By \eqref{eq:eta1eta2} and after replacing $f_1$ by $-f_1$ if necessary, we may assume that
\begin{equation}\label{eq:f1f2}
	\eta_3=f_1f_2\omega\quad \text{on a neighbourhood of }K. 
\end{equation} 
 Denote by $Z_1$ the zero set of $\omega$, a finite subset of $M\setminus K$ (see (a)). Let $\Omega_1\subset M\setminus K$ be a compact neighbourhood of $Z_1$ such that each component of $\Omega_1$ is a closed disc containing a single point in $Z_1$. 
Choose a meromorphic function $f_1$ on a neighbourhood of $\Omega_1$ such that $f_1^2\omega$ is holomorphic and has no zeros on a neighbourhood of  $\Omega_1$. Fix a number $\epsilon_1>0$. By Royden's Runge theorem for regular functions \cite[Theorem 10]{Royden1967JAM}, there is a regular function $g_1$ on $M\setminus Z_1$ satisfying the following conditions.
\begin{enumerate}[(A{1})]
\item $g_1/f_1$ is holomorphic and has no zeros on $K$.
\item $|g_1-f_1|<\epsilon_1$ on a neighbourhood of $K\cup\Omega_1$.
\item $g_1^2\omega$ is regular on $M$ and has no zeros on $\Omega_1$.
\end{enumerate}

Now, let $Z_2$ denote the zero set of $g_1$ on $M\setminus K$. Note that $Z_2$ is finite and $Z_2\subset M\setminus(K\cup\Omega_1)$ by (C1). Let $\Omega_2\subset M\setminus(K\cup\Omega_1)$ be a compact neighbourhood of $Z_2$ such that each component of $\Omega_2$ is a closed disc containing a single point in $Z_2$. Extend the function $f_2$ to $\Omega_1\cup\Omega_2$ by setting $f_2=g_1$ on $\Omega_1$ and $f_2=1$ on $\Omega_2$; note that the compact sets $K$, $\Omega_1$, and $\Omega_2$ are mutually disjoint.  Also note that $f_2$ is meromorphic on a neighbourhood of $K\cup\Omega_1\cup\Omega_2$ and holomorphic on $(K\cup\Omega_1\cup\Omega_2)\setminus Z_1$. Choose $\epsilon_2>0$. By \cite[Theorem 10]{Royden1967JAM}, there is a regular function $g_2$ on $M\setminus Z_1$ satisfying the following conditions.
\begin{enumerate}[(A{2})]
\item $g_2/f_2$ is holomorphic and has no zeros on $K$.
\item $|g_2-f_2|<\epsilon_2$ on a neighbourhood of $K\cup\Omega_1\cup\Omega_2$.
\item $g_2^2\omega$ is regular on $M$ and has no zeros on $\Omega_1\cup\Omega_2$. (This is possible by (C1).)
\end{enumerate}

Set $\rho_i=g_i^2\omega$, $i=1,2$. By (C1), (C2), and since $Z_1\subset \Omega_1$, these are regular forms on $M$. By (a), (A1), (A2), and since $\eta_1$ and $\eta_2$ have no common zeros on $K$, we see that $\rho_1$ and $\rho_2$ have no common zeros on $K$. Using also (C1), (C2), and since $Z_1\subset\Omega_1$ and $Z_2\subset\Omega_2$, we then infer that $\rho_1$ and $\rho_2$ have no common zeros on $M$. Finally, consider the regular form $\rho_3=g_1g_2\omega$ on $M$. It is clear that $\rho_1\rho_2=\rho_3^2$, so $\rho=(\rho_1,\rho_2,\rho_3)$ is a regular section of $\mathcal A'$ on $M$. Finally, \eqref{eq:etai}, \eqref{eq:f1f2}, (B1), and (B2) ensure that $\rho$ is $\epsilon$-close to $\eta$ on a neighbourhood of $K$ for any given $\epsilon>0$, provided that $\epsilon_1$ and $\epsilon_2$ are chosen sufficiently small. 
\end{proof}


\section{Approximation by regular sections}
\label{sec:sections}

\noindent
In this section we establish the following theorem, the second main result of the paper, on approximation by regular sections with control of periods.  Subsequently, we prove Theorem \ref{t:cohomology}. Recall that the first homology group with integer coefficients $H_1(M,\Z)$ of a smooth affine curve $M$ is isomorphic to $\Z^r$ for some integer $r\ge 0$.
%
%
\begin{theorem}   \label{t:sections}
Let $A\subset\C_*^n$ be the punctured cone on a connected submanifold $Y$ of $\P^{n-1}$, $n\geq 2$, and assume that $A$ is algebraically elliptic and not contained in a hyperplane in $\C^n$.  Let $M$ be a smooth affine curve and $\mathcal A$ be the subbundle of $(T^*M)^{\oplus n}$ defined by $A$.  Also let $K$ be a holomorphically convex compact subset of $M$, $s$ be a holomorphic section of $\mathcal A$ on a neighbourhood of $K$, and $\mathcal F:H_1(M,\Z)\to\C^n$ be a group homomorphism such that 
\begin{equation}\label{eq:periods-s}
	\mathcal F(C)=\int_C s\quad \text{for every loop $C$ in a neighbourhood of $K$.}
\end{equation}
Finally, let $C_1,\ldots,C_\ell$ be smooth oriented embedded arcs in $K$ such that $C_1\cup\cdots\cup C_\ell$ is holomorphically convex in $M$ and each arc $C_l$ contains a nontrivial arc disjoint from $\bigcup_{i\neq l} C_i$. Then the following are equivalent.
\begin{enumerate}[{\rm (i)}]
\item  $s$ can be uniformly approximated on a neighbourhood of $K$ by regular sections $t$ of $\mathcal A$ on $M$ with an effective pole at each end of $M$,
such that $\int_C t=\mathcal F(C)$ for every loop $C$ in $M$ and $\int_{C_l} t=\int_{C_l} s$ for $l=1,\ldots,\ell$.
\item  There is a neighbourhood $U$ of $K$ such that the homotopy class of continuous sections of $\mathcal A\,\vert\, U$ that contains $s$ also contains the restriction of a regular section of $\mathcal A$ on $M$.
\end{enumerate}
Furthemore, if {\rm (ii)} holds, then the regular sections $t=(t_1,\ldots,t_n)$ in {\rm (i)} can be chosen such that all the component 1-forms $t_1,\ldots,t_n$ have an effective pole at each end of~$M$.
\end{theorem}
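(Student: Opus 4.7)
The implication (i) $\Rightarrow$ (ii) is immediate: for a regular section $t$ sufficiently close to $s$ uniformly on a neighbourhood $U$ of $K$, the sections $s$ and $t$ lie in the same path-component of the space of continuous sections of $\mathcal A|_U$, since $A$ is a smooth manifold and small perturbations of a section with values in $A$ stay in $A$ and can be joined by short paths inside $A$.

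For (ii) $\Rightarrow$ (i), my plan follows the period-dominating spray strategy of Alarc\'on--Forstneri\v c \cite{AlarconForstneric2014IM} and Alarc\'on--Castro-Infantes \cite{AlarconCastro-Infantes2019APDE}, with Forstneri\v c's algebraic Oka approximation theorem replacing the holomorphic Runge theorem at the key step. First, I would enlarge $K$ to a smoothly bounded holomorphically convex compact domain $L\subset M$ containing $C_1\cup\cdots\cup C_\ell$ together with a basis $\gamma_1,\ldots,\gamma_r$ of embedded loops generating $H_1(M,\Z)$, and extend $s$ to a holomorphic section $\tilde s$ of $\mathcal A$ on a neighbourhood of $L$, staying in the homotopy class hypothesised in (ii) and satisfying $\int_{\gamma_i}\tilde s=\mathcal F(\gamma_i)$ for every $i$. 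The extension exists because $\mathcal A$ is locally trivial with connected fibre $A$; the period equations are solved by local modifications of $\tilde s$ in tubular neighbourhoods of the $\gamma_i$, using that $A$ is not contained in any hyperplane. Next, I would invoke Forstneri\v c's algebraic Runge theorem \cite[Theorem 3.1]{Forstneric2006AJM} (also \cite[Theorem 6.15.3]{Forstneric2017E}, cited in Remark \ref{rem:intro}(d)): since $A$ is algebraically elliptic and $\tilde s$ is holomorphically homotopic on $L$ to the restriction of a regular section of $\mathcal A$ on $M$, there is a regular section $t_0$ of $\mathcal A$ on $M$ approximating $\tilde s$ uniformly on a neighbourhood of $L$.

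The periods $\int_{\gamma_i} t_0$, the arc integrals $\int_{C_l}t_0$, and the pole data of $t_0$ at the ends of $M$ are close to, but generally not equal to, the desired values. To correct them simultaneously, I would build a regular spray $(t_\zeta)_{\zeta\in B\subset\C^N}$ of sections of $\mathcal A$ on $M$, with central value $t_0$, obtained by algebraic composition of $t_0$ with a finite-dimensional family of regular self-maps of $A$ coming from its algebraic ellipticity, localised by regular cutoff functions (supplied by Royden's theorem, as in the proof of Proposition \ref{pro:large}) to finitely many mutually disjoint sites on $M$: interior subarcs of each $C_l$ disjoint from $\bigcup_{i\neq l} C_i$ (this is where the transversality hypothesis on the arcs enters) to correct arc integrals, short subarcs of each $\gamma_i$ lying in $M\setminus K$ to correct loop periods, and punctured neighbourhoods of each end $p_j$ to impose effective polar behaviour. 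The non-hyperplane hypothesis on $A$ lets me arrange that the spray directions at each site span $\C^n$, so that the differential at $\zeta=0$ of the combined period--arc--pole map
\[
\zeta\;\longmapsto\;\Bigl(\int_{\gamma_i} t_\zeta-\mathcal F(\gamma_i),\;\int_{C_l} t_\zeta-\int_{C_l} s,\;\text{principal parts of }t_\zeta\text{ at each }p_j\Bigr)
\]
is surjective. The implicit function theorem then produces $\zeta_\ast$ near $0$ solving all the equations simultaneously, while $t_{\zeta_\ast}$ remains uniformly close to $s$ on $K$ because the variations used to correct the arc integrals are arbitrarily small.

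For the furthermore statement, I would enlarge the spray so that the leading polar coefficient of each of the $n$ component 1-forms of $t_\zeta$ at each of the $m$ ends $p_j$ can be varied independently. This is feasible because one may place arbitrarily many mutually disjoint spray sites in disjoint punctured neighbourhoods of each end and, using again the non-hyperplane condition, direct the perturbations of each component independently in $\C^n$. The main obstacle I anticipate is the simultaneous construction of a single regular spray on $M$ that dominates the combined period--arc--pole space while leaving the approximation on $K$ undisturbed; this is the algebraic analogue of the spray constructions in \cite[Section 5]{AlarconForstneric2014IM}, and the passage from holomorphic to regular sprays is precisely where the algebraic ellipticity (rather than merely Oka) hypothesis on $A$ is essential.
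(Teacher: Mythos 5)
Your high-level plan — period-dominating sprays plus Forstneri\v c's algebraic approximation theorem — is indeed the engine of the paper's proof, but the order in which you apply these tools is wrong in a way that matters, and your treatment of the pole condition has a genuine gap.

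On the spray step: you propose to first apply \cite[Theorem 3.1]{Forstneric2006AJM} to a single section $\tilde s$ to get a regular $t_0$, and then build a \emph{regular} period-dominating spray centred at $t_0$ by composing with regular self-maps of $A$ ``localised by regular cutoff functions (supplied by Royden's theorem)''. This cannot work as stated: a regular function on a smooth affine curve that vanishes on a nonempty open set is identically zero, so regular cutoff functions do not exist, and Royden's theorem gives Runge-type approximation by regular functions, not cutoffs. The paper's crucial move is the opposite order of operations: it first constructs a \emph{holomorphic} period-dominating spray $\phi$ (following the standard construction of \cite[Lemma 3.3.1]{AlarconForstnericLopez2021}, where the localisation only needs holomorphic, not regular, coefficient functions), extends it to a holomorphic spray $\sigma$ on $\C^k\times M$ via the basic Oka property, and only then applies Forstneri\v c's algebraic approximation theorem to the \emph{whole spray} $\sigma\theta$, viewed as a holomorphic section of the pullback of $\mathcal A$ over $\C^k\times M$. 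The resulting regular spray $\tau$ inherits the period-domination by continuity, and a parameter $\zeta_0$ restoring all periods and arc integrals is found by the implicit function theorem. Applying the algebraic Oka theorem at the spray level, rather than at the level of a single section, is what makes a period-dominating regular family available; your plan would need to supply a genuinely regular spray construction that you have not provided.

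On the pole condition: you propose to include spray sites in punctured neighbourhoods of the ends ``to impose effective polar behaviour'' and solve for all constraints simultaneously by the implicit function theorem. But having an effective pole is a nonvanishing (open) condition on the principal part, not an equality that the implicit function theorem can solve; and there is no evident reason a spray variation that keeps $t_\zeta$ uniformly close to $s$ on $K$ would produce poles at the ends when $t_0$ has none. The paper handles poles by an entirely different device: after establishing the approximation without the pole condition (Claim \ref{cl:no-poles}), it enlarges $K$ to $L\cup\Delta$ where $\Delta$ consists of closed discs near the ends, extends $s$ to $\tilde s$ on $L\cup\Delta$ so that $\tilde s/\vartheta$ has large modulus on $\Delta$ and smaller modulus on $bL$, and applies Claim \ref{cl:no-poles} to this extension; the maximum modulus principle then forces each component of the approximating regular section to have a pole at each end, exactly as in Proposition \ref{pro:large}. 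You would need to replace your spray-site idea with such an argument, or something equivalent, to prove the pole conclusion and the ``furthermore'' statement.
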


We start with some preparations. We expect the following to have been observed before, but we do not know a reference for it.
%
%
\begin{lemma}\label{lem:extension}
Let $M$ be an open Riemann surface and $Y$ be a connected complex manifold.  Let $K$ be a holomorphically convex compact set in $M$ and $f:K\to Y$ be a continuous map.  Then $f$ admits a continuous extension $M\to Y$.  If, in addition, $Y$ is an Oka manifold and $f$ is holomorphic in $\mathring K$, then $f$ can be approximated uniformly on $K$ by holomorphic maps $M\to Y$ homotopic to any given continuous extension $M\to Y$ of $f$.
\end{lemma}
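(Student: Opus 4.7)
The plan is to treat the two conclusions in succession. For the first (existence of a continuous extension), I would exploit the fact that, since $K$ is holomorphically convex in the open Riemann surface $M$, no connected component of $M\setminus K$ is relatively compact. I would first choose a smoothly bounded Runge compact neighbourhood $L_0$ of $K$ such that the inclusion $K\hookrightarrow L_0$ is a homotopy equivalence, and extend $f$ to a continuous map $L_0\to Y$ by the homotopy extension property. Then I would exhaust $M$ by an increasing sequence $L_0\subset L_1\subset L_2\subset\cdots$ of smoothly bounded Runge compacts arising from the sublevel sets of a strongly subharmonic exhaustion of $M$ that has no critical points on $L_0$ and whose critical points outside $L_0$ are all of Morse index $1$ (index-$0$ points do not appear, as they would create components disjoint from the connected set $L_0$). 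Each $L_{j+1}$ is thereby obtained from $L_j$ by attaching a single $1$-handle along an arc in $bL_j$, and extending a continuous map across such a handle amounts to choosing a path in $Y$ joining two points, which is possible since $Y$ is connected. No higher-dimensional obstructions arise because $M$ has the homotopy type of a $1$-complex. Passing to the limit yields a continuous extension $F\colon M\to Y$ of $f$.

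For the second conclusion, I would reduce to the basic Oka property with approximation by means of a preliminary Mergelyan step. Let $F\colon M\to Y$ be a given continuous extension of $f$ and fix $\epsilon>0$. Since $Y$ is Oka and $f$ is continuous on $K$ and holomorphic on $\mathring K$, I would first invoke the Mergelyan theorem for maps to an Oka manifold on an open Riemann surface (see \cite[Section 5.4]{Forstneric2017E}) to approximate $f$ uniformly within $\epsilon/2$ on $K$ by a map $\tilde f\colon U\to Y$ holomorphic on a neighbourhood $U$ of $K$. If the approximation is close enough, $f$ and $\tilde f|_K$ are joined by a short continuous homotopy within a tubular neighbourhood of the image; extending this homotopy off $K$ via a collar using the homotopy extension property produces a continuous map $\tilde F\colon M\to Y$ that is homotopic to $F$ and coincides with $\tilde f$ on a neighbourhood of $K$. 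Now $\tilde F$ is continuous on $M$ and holomorphic on a neighbourhood of the holomorphically convex compact $K$, so the basic Oka property with approximation for the Oka manifold $Y$ (cf. \cite[Theorem 5.4.4]{Forstneric2017E}) yields a holomorphic map $G\colon M\to Y$ that is $\epsilon/2$-close to $\tilde F$ on $K$ and homotopic to $\tilde F$ through continuous maps. By the triangle inequality $G$ is $\epsilon$-close to $f$ on $K$, and $G\simeq \tilde F\simeq F$, as required.

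The main obstacle I anticipate is precisely the Mergelyan step. The basic Oka property with approximation, as usually stated, requires the input map to be holomorphic on a \emph{neighbourhood} of $K$, whereas $f$ is only holomorphic on the interior $\mathring K$. Bridging this gap without losing control of the homotopy class relies on a Mergelyan-type approximation theorem for continuous maps into Oka manifolds that are holomorphic in the interior of a holomorphically convex compact set, combined with a careful application of the homotopy extension property to ensure that the new continuous extension $\tilde F$ remains homotopic to the given $F$. The topological part of the argument (the first conclusion) is standard once the Morse-theoretic handle presentation of $M$ relative to $L_0$ is in place.
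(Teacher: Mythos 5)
Your overall plan for the first conclusion (reduce to a compact domain and then exhaust $M$ by Morse handles of index $1$) matches the paper's structure, but the very first step has a genuine gap. You propose to pick a smoothly bounded compact neighbourhood $L_0\supset K$ such that $K\hookrightarrow L_0$ is a homotopy equivalence and then extend $f$ across $L_0$ by the homotopy extension property. No such $L_0$ exists in general: a holomorphically convex compact $K$ in an open Riemann surface need not be an ANR and may have infinitely many connected components (a Cantor set in $\C$ is polynomially, hence holomorphically, convex), whereas any smoothly bounded compact domain has only finitely many components; the inclusion can therefore never be a homotopy equivalence. The same issue undermines your later appeal to the homotopy extension property for the pair $(M,K)$, which again is not automatic for arbitrary compact $K$. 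The paper avoids this by a more robust initial step: embed $Y$ smoothly in some $\R^N$, apply the Tietze extension theorem to $f:K\to\R^N$ to get a continuous extension to a neighbourhood of $K$ in $M$, and then compose with the retraction of a tubular neighbourhood of $Y$ to land back in $Y$; this yields a continuous extension to a smoothly bounded compact domain $K_0\supset K$ with no topological hypotheses on $K$ whatsoever. From $K_0$ onward, the Morse-handle induction you describe is essentially the paper's argument (with each index-$1$ handle bridged by a path in the connected $Y$), so the remainder of your first part is sound once the initial extension is repaired.

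For the second conclusion, the paper simply invokes the Mergelyan theorem for maps from Riemann surfaces into Oka manifolds (the reference \cite[Corollary 8]{FornaessForstnericWold2020}), which already produces holomorphic maps $M\to Y$ uniformly close to $f$ on $K$ and homotopic to the given continuous extension. Your two-step reduction (a weaker Mergelyan step to make $f$ holomorphic on a neighbourhood of $K$, then the basic Oka property with approximation) is in spirit a re-derivation of that cited theorem, and it would work, but it rests on the same homotopy-extension step for $(M,K)$ that fails for pathological $K$; to make it airtight you would again need to route the extension of the short homotopy through Tietze and the tubular neighbourhood of $Y$ rather than through an abstract HEP.
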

\begin{proof}
By the Tietze extension theorem and the existence of a smooth tubular neighbourhood of $Y$ once smoothly embedded into some Euclidean space, $f$ extends continuously to a neighbourhood of $K$.  Let $K_0\supset K$ be a smoothly bounded holomorphically convex compact domain in $M$, possibly disconnected, such that $f$ admits a continuous extension $f_0:K_0\to Y$. Let $\rho$ be a smooth strongly subharmonic Morse exhaustion function on $M$ with $K_0=\{\rho\le 0\}$. Let $a_0=0<a_1<a_2<\cdots$ be a divergent sequence of regular values of $\rho$ such that $\rho$ has at most one critical point in $K_{i+1}\setminus K_i$ for all $i=0,1,2,\ldots$, where 
$K_i=\{\rho\le a_i\}$. Each $K_i$ is a smoothly bounded compact domain (possibly disconnected) with $K_i\Subset K_{i+1}$ and $M=\bigcup_{i\ge 0} K_i$. If $\rho$ does not have critical points in $K_1\setminus K_0$, then $K_0$ is a strong deformation retract of $K_1$, and hence $f_0$ (and thus $f$) extends continuously to $K_1$. If there is a (necessarily unique) critical point of $\rho$ in $K_1\setminus K_0$, then there is a smooth embedded Jordan arc $E$ in $M$ intersecting $K_0$ precisely at its two endpoints and such that the intersection of $E$ and the boundary of $K_0$ is transverse and $K_0\cup E$ is a strong deformation retract of $K_1$. Since $Y$ is path-connected, $f_0$ extends to a continuous map $K_0\cup E\to Y$, and since $K_0\cup E$ is a strong deformation retract of $K_1$, there is a continuous extension $f_1:K_1\to Y$ of $f_0$. Repeating this process we may then extend $f_1$ to a continuous map $f_2:K_2\to Y$, and continuing inductively we obtain in the limit a continuous extension $M\to Y$ of $f_0$, hence of $f$. This proves the first assertion in the lemma. The second assertion is then guaranteed by the Mergelyan theorem for maps from Riemann surfaces to Oka manifolds; see e.g.\ \cite[Corollary 8]{FornaessForstnericWold2020}.
\end{proof}

A set $S$ in a smooth surface $M$ is said to be {\em admissible} if it is of the form $S=K\cup E$, where $K$ is a (possibly empty) finite union of mutually disjoint compact connected domains with piecewise $\Cscr^1$ boundaries in $M$ and $E=S\setminus \mathring K$ is a finite union of mutually disjoint smooth Jordan arcs and closed Jordan curves meeting $K$ only at their endpoints (if at all) and such that their intersections with the boundary $bK$ of $K$ are transverse; see \cite[Definition 1.12.9]{AlarconForstnericLopez2021}. Let us record the following result.
%
%
\begin{lemma}\label{lem:3.3.1}
Let $A\subset\C_*^n$ be the punctured cone on a connected submanifold $Y$ of $\P^{n-1}$, $n\geq 2$, and assume that $A$ is an Oka manifold and is not contained in a hyperplane in $\C^n$.
Let $M$ be an open Riemann surface, $S=K\cup E\subset M$ be a holomorphically convex admissible set, and $C_1,\ldots,C_\ell$ be smooth oriented embedded Jordan arcs and closed Jordan curves  in $S$ such that $C_1\cup\cdots\cup C_\ell$ is holomorphically convex in $M$ and each $C_l$ contains a nontrivial arc disjoint from $\bigcup_{i\neq l} C_i$.  Finally, let $\theta$ be a holomorphic $1$-form vanishing nowhere on $M$. 
Then every map $f:S\to A$ which is of class $\Cscr^r$ $(r\ge 0)$ on a neighbourhood of $S$ and holomorphic on $\mathring S=\mathring K$ can be approximated in $\Cscr^r(S)$ by holomorphic maps $F:M\to A$ such that $F(M)$ is not contained in a hyperplane in $\C^n$, $F$ agrees with $f$ on any given finite set in $S$, $F$ agrees with $f$ to any given finite order on any given finite set in $\mathring S$, and $\int_{C_l} F\theta=\int_{C_l}f\theta$ for all $l=1,\ldots,\ell$.
\end{lemma}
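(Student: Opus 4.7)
I would prove the lemma in two stages: first, an initial Mergelyan-type approximation into $A$ with the required interpolation data; second, a period correction via a holomorphic spray.

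For Stage 1, combining Lemma \ref{lem:extension} with the standard jet-interpolation version of the Mergelyan theorem for maps from holomorphically convex admissible sets in a Riemann surface into an Oka manifold (see e.g.\ \cite[Theorem 5.4.4]{Forstneric2017E}), one obtains a holomorphic map $F_0:M\to A$ that approximates $f$ arbitrarily closely in $\Cscr^r(S)$, agrees with $f$ at the prescribed finite set in $S$, and agrees with $f$ to the prescribed order at the prescribed finite set in $\mathring S$. Since $A$ is not contained in any hyperplane, a further small generic perturbation of $F_0$ at a point of $M\setminus S$ ensures $F_0(M)$ is likewise not contained in any hyperplane (an open condition realised by moving a single free value).

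For Stage 2, exploit the $\C^*$-cone structure of $A$: the radial vector field $z\mapsto z$ is holomorphic and tangent to $A$, so pointwise multiplication by a nonvanishing holomorphic function preserves $A$. Since $F_0(M)$ is not in a hyperplane, neither is $F_0$ restricted to any nontrivial arc (by real-analytic continuation along the connected surface $M$). For each $l$, fix a nontrivial subarc $\gamma_l\subset C_l\setminus\bigcup_{i\neq l}C_i$ disjoint from all interpolation data, and pick points $p_{l,1},\ldots,p_{l,n}\in\gamma_l$ such that $\{F_0(p_{l,k})\}_{k=1}^{n}$ is a $\C$-basis of $\C^n$; let $D_{l,k}\ni p_{l,k}$ be small pairwise disjoint disks in $\gamma_l$, also disjoint from the interpolation data and from $\bigcup_{l'\neq l}C_{l'}$. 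By a Runge-type approximation on a suitable holomorphically convex set containing $S$, the disks $D_{l,k}$, and the $C_{l'}$, obtain holomorphic functions $h_{l,k}:M\to\C$ that are concentrated on $D_{l,k}$ (close to $0$ on the other $D_{l',k'}$ and on the $C_{l'}$ for $l'\neq l$) and vanish to sufficient order at all interpolation data. Define
\begin{equation*}
G(x,\zeta)=\exp\Bigl(\sum_{l,k}\zeta_{l,k}h_{l,k}(x)\Bigr)F_0(x),\qquad G:M\times\C^N\to A,\ N=n\ell,
\end{equation*}
a holomorphic spray with $G(\cdot,0)=F_0$, values in $A$ since $A$ is a cone, and agreeing with $F_0$ at the interpolation data to the required order for every $\zeta$.

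The holomorphic period map $P:\C^N\to(\C^n)^\ell$, $P(\zeta)_l=\int_{C_l}G(\cdot,\zeta)\theta$, has Jacobian at $\zeta=0$ approximately block-diagonal: the $l$-th diagonal block has columns approximately $\bigl(\int_{C_l}h_{l,k}\theta\bigr)F_0(p_{l,k})$, which span $\C^n$ by the choice of $p_{l,k}$ and the nonvanishing of the integrals for sufficiently concentrated $h_{l,k}$, so the block is invertible; the off-diagonal blocks can be made arbitrarily small. Since Stage 1 makes $P(0)$ arbitrarily close to $\bigl(\int_{C_l}f\theta\bigr)_l$, the holomorphic implicit function theorem yields $\zeta_*\in\C^N$ with $|\zeta_*|$ arbitrarily small and $P(\zeta_*)=\bigl(\int_{C_l}f\theta\bigr)_l$. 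The map $F=G(\cdot,\zeta_*):M\to A$ then satisfies $\Cscr^r(S)$ approximation of $f$ (since $|\zeta_*|$ is small), interpolation and jet interpolation (by vanishing of the $h_{l,k}$), the prescribed period identities, and the non-hyperplane condition on $F(M)$ (an open condition preserved by small perturbation). The main obstacle is the simultaneous construction of the $h_{l,k}$: they must be holomorphic on all of $M$, approximately supported on $D_{l,k}$, close to zero on the other $D_{l',k'}$ and on the $C_{l'}$ for $l'\neq l$, and vanish to sufficient order at the finite interpolation data; once this Runge-type approximation is arranged, the period correction reduces to a routine implicit function theorem argument in finite dimensions.
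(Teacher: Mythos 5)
Your proposal reconstructs the standard period-dominating spray argument behind this lemma (the paper itself only cites \cite[Lemma 3.3.1]{AlarconForstnericLopez2021}, where the spray is built from a tuple of $\C$-complete holomorphic vector fields on $\C^n$ tangent to $A$ together with Mergelyan multipliers). Your variant replaces the tuple of vector fields by the single radial field via the $\C^*$-action on the cone, obtaining the required $n$-dimensional spanning by evaluating the core $F_0$ at $n$ points $p_{l,k}$ of each arc -- a genuine simplification that is available precisely because $A$ is a cone, at the price that you must secure $F_0(M)\not\subset$ hyperplane \emph{before} building the spray, whereas a vector-field spray spanning $T_aA$ needs no such hypothesis on its core.

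Two steps have genuine gaps. (1) The Mergelyan/Runge step for the $h_{l,k}$ cannot be carried out ``on a holomorphically convex set containing $S$'' as stated: if a chosen $D_{l,k}$ lies in $\mathring K$, a holomorphic function with $h_{l,k}\approx 1$ at $p_{l,k}$ cannot be uniformly small on $K$ by the maximum principle, and the ideal bump is not holomorphic on $\mathring K$, so the Mergelyan theorem for $\mathcal{C}(S)\cap\Oscr(\mathring K)$ does not apply. What you actually need is approximation on the holomorphically convex set $\Gamma=C_1\cup\cdots\cup C_\ell$, which has empty interior, so continuous functions on $\Gamma$ (with prescribed jets at the finitely many interpolation points) are approximable by $\Oscr(M)$; boundedness of $h_{l,k}$ on $S$ is then automatic and suffices for the $\Cscr^r(S)$-closeness of $G(\cdot,\zeta_*)$ to $F_0$. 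You should also say explicitly that $h_{l,k}$ is small on all of $\Gamma\setminus D_{l,k}$, including $C_l\setminus\bigcup_{k'}D_{l,k'}$, or the diagonal block need not be approximately $\bigl(c_{l,k}F_0(p_{l,k})\bigr)_k$. (2) ``Moving a single free value'' at one point of $M\setminus S$ does not force $F_0(M)\not\subset$ hyperplane: if $f$ is constant, $F_0(S)$ spans at most a line, and one extra generic value raises the span to at most two dimensions, which fails for $n\ge 3$. Prescribe instead $F_0(q_i)=a_i$ at $n$ points $q_i\in M\setminus S$, with $a_1,\ldots,a_n\in A$ a $\C$-basis (possible since $A\not\subset$ any hyperplane) and $S\cup\{q_1,\ldots,q_n\}$ holomorphically convex. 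This matters because your Stage 2 spray scales $F_0$ pointwise and hence \emph{preserves} linear hyperplanes, so it can never restore nondegeneracy lost in Stage 1; nondegeneracy must be arranged in Stage 1 and then persists for $F=G(\cdot,\zeta_*)$ as an open condition under small perturbation.
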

Note that the given map $f$ in the lemma extends to a continuous map $M\to A$ by Lemma \ref{lem:extension}, and hence there is no topological obstruction to holomorphic approximation.  When $A$ is the punctured null quadric \eqref{eq:null}, Lemma \ref{lem:3.3.1} coincides with \cite[Lemma 3.3.1]{AlarconForstnericLopez2021}. The proof for an arbitrary cone $A$ as in the statement follows the proof in \cite{AlarconForstnericLopez2021} word for word, but using the tools from \cite{AlarconForstneric2014IM,AlarconCastro-Infantes2019APDE} that deal with maps into an arbitrary cone. See also \cite[Lemma 4.2]{AlarconCastro-Infantes2019APDE} for an analogous result in a slightly different topological setting. We leave the details of the proof to the reader.

%
%
\begin{lemma}\label{lem:paths}
Let $A\subset\C_*^n$ be the punctured cone on a connected submanifold $Y$ of $\P^{n-1}$, $n\geq 2$, and assume that $A$ is an Oka manifold, not contained in a hyperplane in $\C^n$. Also let $\sigma:[0,1]\to A$ be a path. Then for any $\mu\in\C^n$, there is a homotopy of paths $\sigma^t:[0,1]\to A$, $t\in[0,1]$, such that $\sigma^0=\sigma$, $\sigma^t(0)=\sigma(0)$ and $\sigma^t(1)=\sigma(1)$ for all $t\in[0,1]$, 
and $\int_0^1\sigma^1(x)\,dx=\mu$.
\end{lemma}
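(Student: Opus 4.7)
Set $\delta := \mu - \int_0^1\sigma(x)\,dx \in \C^n$ and $v_0 := \sigma(1/2) \in A$. The plan is to construct a null-homotopic loop $L:[0,1]\to A$ at $v_0$ whose integral is tuned to absorb $\delta$, and then insert $L$ into $\sigma$ via a continuous one-parameter family $\sigma^t$ that rescales $\sigma$ into the flanks of a growing central subinterval around $x=1/2$. The null-homotopy of $L$ is what ensures $\sigma^1$ and $\sigma$ remain in the same homotopy class of paths from $\sigma(0)$ to $\sigma(1)$ in $A$, so that the deformation is a genuine homotopy rel endpoints.

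\emph{Loop with prescribed integral.} Since $A$ spans $\C^n$, pick $\C$-linearly independent vectors $a_1,\ldots,a_n\in A$. By path-connectedness of the Oka manifold $A$, choose paths $\eta_j:[0,1]\to A$ from $v_0$ to $a_j$. For any continuous $\phi_j:[0,1]\to\C^*$ with $\phi_j(0)=\phi_j(1)=1$ and winding number $0$ about the origin, the scaled loop $u\mapsto\phi_j(u)a_j$ lies in $\C^*a_j\subset A$ (using that $A$ is a cone), is null-homotopic there, and contributes $\bigl(\int_0^1\phi_j\bigr)a_j$ to its integral. An elementary construction (a bump-shaped loop in, say, $\C^*\setminus(-\infty,0]$, where the integral can be adjusted freely) realises every complex number as $\int_0^1\phi_j$. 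Conjugating each scaled loop by $\eta_j$ produces a null-homotopic loop at $v_0$, and concatenating the $n$ resulting loops, after reparametrisation to $[0,1]$, yields a null-homotopic loop $L:[0,1]\to A$ whose integral has the form $C+\sum_{j=1}^n c_j a_j$ for a fixed vector $C\in\C^n$ (depending only on the $\eta_j$ and the reparametrisation) and coefficients $c_j\in\C$ linear in $\int\phi_j$. Since $\{a_j\}$ is a $\C$-basis of $\C^n$, we can arrange $\int_0^1 L=V$ for any prescribed $V\in\C^n$.

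\emph{Homotopy and verification.} With $\epsilon(t):=t/4$, choose $L_1$ as above with $\int_0^1 L_1 = 4\delta + 3\int_{1/3}^{2/3}\sigma$, and let $\{L_t\}_{t\in[0,1]}$ be a null-homotopy from the constant loop $L_0\equiv v_0$ to $L_1$. Define $\sigma^t$ to equal $\sigma$ on $[0,1/3]\cup[2/3,1]$; the rescaling $x\mapsto\sigma\bigl(1/3+(x-1/3)/(1-3\epsilon(t))\bigr)$ of $\sigma|_{[1/3,1/2]}$ onto $[1/3,1/2-\epsilon(t)/2]$; the reparametrised loop $x\mapsto L_t\bigl((x-(1/2-\epsilon(t)/2))/\epsilon(t)\bigr)$ on $[1/2-\epsilon(t)/2,1/2+\epsilon(t)/2]$; and symmetrically the rescaling of $\sigma|_{[1/2,2/3]}$ onto $[1/2+\epsilon(t)/2,2/3]$. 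All piecewise junctions match (at $\sigma(1/3),v_0,\sigma(2/3)$); $\sigma^0=\sigma$; endpoints are fixed; and $(t,x)\mapsto\sigma^t(x)$ is continuous (at $t=0$, from uniform convergence $L_t\to v_0$). A direct change of variable on each flank gives
\[
\int_0^1\sigma^t(x)\,dx = \int_0^1\sigma + \epsilon(t)\Bigl(\int_0^1 L_t - 3\int_{1/3}^{2/3}\sigma\Bigr),
\]
and the choice of $L_1$ yields $\int_0^1\sigma^1(x)\,dx=\mu$. The main obstacle is the null-homotopy property in the loop construction: without it, $\sigma^1$ and $\sigma$ would in general lie in distinct homotopy classes rel endpoints and no homotopy between them could exist. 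Restricting the scaling paths $\phi_j$ to winding number $0$ in $\C^*$ resolves this while preserving full control over $\int_0^1\phi_j$, and the remaining checks (the linear algebra for prescribing $\int L$, the reparametrisation bookkeeping, and the continuity of $\sigma^t$ at $t=0$) are routine.
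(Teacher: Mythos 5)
Your proof is correct, and it takes a genuinely different route from the paper's. The paper's proof is essentially a citation: it first perturbs $\sigma$ slightly rel endpoints so that its image spans $\C^n$, then invokes the analogue of \cite[Lemma 3.1]{ForstnericLarusson2019CAG} (a period-dominating spray argument), and finally notes that the approximate period condition can be made exact because of the spanning property, citing \cite[proof of Theorem 1]{AlarconLarusson2017IJM} and \cite[proof of Theorem 5.3]{ForstnericLarusson2019CAG}. Your argument instead builds, by hand, a null-homotopic loop at $\sigma(1/2)$ with any prescribed integral, crucially exploiting the cone structure ($\C^* a_j\subset A$ for $a_j\in A$) together with the spanning hypothesis, and then splices a null-homotopy of that loop into $\sigma$ over a shrinking central subinterval; the change-of-variables bookkeeping you record is correct, the junctions match, and continuity at $t=0$ is exactly as you say. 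Notably, your argument never uses the Oka hypothesis at all — only that $A$ is a punctured cone, path-connected, and not contained in a hyperplane — so it proves a slightly more general statement than the lemma asserts, which is a genuine gain in self-containment. One small point worth tightening: your parenthetical claim that a single ``bump-shaped loop'' in $\C^*\setminus(-\infty,0]$ already realises every complex number as $\int_0^1\phi_j$ is not quite literal — a single out-and-back bump to a point $w$ in the right half-plane only produces integrals in a half-plane. The full claim is still true (the set of achievable integrals is convex, contains points with $\Re$ arbitrarily large, with $\Im$ arbitrarily large of either sign, and with $\Re$ arbitrarily negative via a path that first rises above the slit and then travels far to the left, so by convexity it is all of $\C$), but the justification needs this extra step rather than a single bump.
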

\begin{proof}
Applying a small deformation to $\sigma$ keeping $\sigma(0)$ and $\sigma(1)$ fixed, we can assume that $\sigma([0,1])$ is not contained in any hyperplane in $\C^n$. This enables us to use the analogue of \cite[Lemma 3.1]{ForstnericLarusson2019CAG} with the punctured null quadric replaced by the cone $A$ (see \cite[p.\ 10]{AlarconLarusson2017IJM} or \cite[proof of Theorem 5.3]{ForstnericLarusson2019CAG}) to obtain a homotopy $\sigma^t$ satisfying the conclusion of the lemma. Note that the approximate condition in \cite[Lemma 3.1]{ForstnericLarusson2019CAG} can be made exact since $\sigma([0,1])$ is not contained in any hyperplane in $\C^n$, as is seen in \cite[proof of Theorem 1]{AlarconLarusson2017IJM} and \cite[proof of Theorem 5.3]{ForstnericLarusson2019CAG}. We also refer to \cite[Lemma 3.3]{AlarconCastro-Infantes2019APDE} for a closely related result. 
\end{proof}

With Lemmas \ref{lem:3.3.1} and \ref{lem:paths} in hand, we are now ready to prove the following first main technical step in the proof of Theorem \ref{t:sections}.
%
%
\begin{proposition}\label{p:Oka}
Let $A$ and $M$ be as in Theorem \ref{t:sections}. Also let $\mathcal F:H_1(M,\Z)\to\C^n$ be a group homomorphism and $\theta$ be a nowhere-vanishing holomorphic $1$-form on $M$. 
Then every continuous map $f:M\to A$ is homotopic to a holomorphic map $\tilde f:M\to A$ such that $\tilde f(M)$ is not contained in a hyperplane in $\C^n$ and 
\[
	\int_C\tilde f\theta=\mathcal F(C)\quad \text{for every loop $C$ in $M$}.
\] 
Furthermore, if $f$ is holomorphic on a neighbourhood of a holomorphically convex compact set $K$ in $M$, 
\begin{equation}\label{eq:loop-F}
	\int_C f\theta=\mathcal F(C)\quad \text{for every loop $C$ in a neighbourhood of $K$,} 
\end{equation}
and $C_1,\ldots,C_\ell$ are smooth oriented embedded arcs in $K$ such that  $C_1\cup\cdots\cup C_\ell$ is holomorphically convex in $M$ and each arc $C_l$ contains a nontrivial arc disjoint from $\bigcup_{i\neq l} C_i$, then a holomorphic map $\tilde f$ as above can be chosen to approximate $f$ uniformly on a neighbourhood of $K$, to agree with $f$ to any given finite order on any given finite set in $K$, and to satisfy 
\[
	\int_{C_l}\tilde f\theta=\int_{C_l}f\theta \quad \text{for } l=1,\ldots,\ell.
\]
\end{proposition}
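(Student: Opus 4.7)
The plan is to combine Lemma~\ref{lem:3.3.1} (Runge approximation into $A$ with period preservation along specified arcs) with Lemma~\ref{lem:paths} (adjustment of a path in $A$ to achieve any prescribed integral while keeping endpoints fixed). The unconditional first assertion reduces to the furthermore part: since $A$ is Oka, Lemma~\ref{lem:extension} deforms any continuous $f:M\to A$ into a holomorphic map $M\to A$, after which the furthermore statement is applied with $K$ reduced to a single point, where condition \eqref{eq:loop-F} is vacuous.

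To prove the furthermore assertion, first choose a smoothly bounded holomorphically convex compact domain $K'$ with $K\subset\mathring{K'}$, inside the neighbourhood where $f$ is holomorphic. Let $r$ be the rank of $H_1(M,\Z)$. Attach to $K'$ finitely many mutually disjoint smooth Jordan arcs $E_1,\ldots,E_s$ in $M\setminus\mathring{K'}$, each meeting $bK'$ transversely at its endpoints, so that $S=K'\cup E_1\cup\cdots\cup E_s$ is a holomorphically convex admissible set whose inclusion $S\hra M$ is a homotopy equivalence. Pick a basis $\gamma_1,\ldots,\gamma_r$ of $H_1(S,\Z)=H_1(M,\Z)$ represented by embedded loops in $S$ such that each $\gamma_j$ contains a nontrivial sub-arc $\alpha_j$ lying in the interior of some $E_{i_j}$, with the $\alpha_j$'s mutually disjoint; by a generic choice of the $K'$-parts of the $\gamma_j$'s, we arrange in addition that the combined family $C_1,\ldots,C_\ell,\gamma_1,\ldots,\gamma_r$ satisfies the disjointness hypothesis of Lemma~\ref{lem:3.3.1}. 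Extend $f$ continuously to $S$ using path-connectedness of $A$ on each $E_i$.

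Next, adjust the periods on the basis loops. For each $j$, parameterise $\alpha_j:[0,1]\to S$ and write $\alpha_j^*\theta=\phi_j(x)\,dx$, where $\phi_j$ is continuous and nowhere vanishing since $\theta$ has no zeros. Because $A$ is a punctured cone and hence closed under multiplication by $\C^*$, the path $g_j(x)=\phi_j(x)(f\circ\alpha_j)(x)$ takes values in $A$ and satisfies $\int_0^1 g_j(x)\,dx=\int_{\alpha_j}f\theta$. Apply Lemma~\ref{lem:paths} to $g_j$ with target integral $\mathcal F(\gamma_j)-\int_{\gamma_j\setminus\mathring\alpha_j}f\theta$, obtaining a path $\tilde g_j:[0,1]\to A$ with the same endpoints as $g_j$; set the new $f\circ\alpha_j=\tilde g_j/\phi_j$ and leave $f$ unchanged elsewhere on $S$. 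Doing this independently on each $\alpha_j$ yields a continuous map $f:S\to A$ that is holomorphic on a neighbourhood of $K$, agrees with the original $f$ on $K$ and on the arcs $C_l$, and satisfies $\int_{\gamma_j}f\theta=\mathcal F(\gamma_j)$ for every $j$.

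Finally, apply Lemma~\ref{lem:3.3.1} to $f$ on the admissible set $S$ with the family of arcs $C_1,\ldots,C_\ell,\gamma_1,\ldots,\gamma_r$. This yields a holomorphic map $\tilde f:M\to A$, arbitrarily close to $f$ on $S$, whose image is not contained in any hyperplane, matching $f$ to any prescribed finite order at the given finite set in $K$, and satisfying $\int_{C_l}\tilde f\theta=\int_{C_l}f\theta$ and $\int_{\gamma_j}\tilde f\theta=\mathcal F(\gamma_j)$ for all $l$ and $j$. Since both $\mathcal F$ and the period map $\gamma\mapsto\int_\gamma\tilde f\theta$ are group homomorphisms $H_1(M,\Z)\to\C^n$ agreeing on the basis $\{\gamma_j\}$, they agree on all of $H_1(M,\Z)$, yielding the period identity for every loop in $M$. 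The $\Cscr^0$-closeness on $S$ together with the homotopy equivalence $S\simeq M$ gives $\tilde f\simeq f$ on $M$. The main obstacle I anticipate is the combinatorial and topological setup of $S$, the basis $\{\gamma_j\}$, and the modifiable sub-arcs $\{\alpha_j\}$: one must simultaneously guarantee holomorphic convexity, the $H_1$-equivalence $S\simeq M$, and the disjointness hypothesis of Lemma~\ref{lem:3.3.1}, all while keeping each $\alpha_j$ disjoint from $K\cup C_1\cup\cdots\cup C_\ell$ so that the period adjustment via Lemma~\ref{lem:paths} does not disturb the data already fixed.
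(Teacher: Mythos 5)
Your proof is correct and takes a genuinely different route from the paper's. The paper proceeds by induction on $-\chi(M\setminus K)$: after reducing to connected smoothly bounded $K$, the base case ($K$ a deformation retract of $M$) is handled by a single application of Lemma \ref{lem:3.3.1}, and the inductive step attaches one arc $E$ at a time, uses Lemma \ref{lem:paths} to fix the period of the newly created loop, reholomorphizes on $K\cup E$ via Lemma \ref{lem:3.3.1}, and then invokes the induction hypothesis. You instead attach \emph{all} the needed arcs at once to form a single admissible $S$ that is a deformation retract of $M$, adjust \emph{all} basis periods simultaneously on disjoint sub-arcs $\alpha_j$ of the new arcs, and apply Lemma \ref{lem:3.3.1} once. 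The one-shot approach is conceptually cleaner but shifts the technical burden to the simultaneous combinatorial setup: arranging holomorphic convexity and the pairwise-disjoint-subarc hypothesis for the full family $C_1,\ldots,C_\ell,\gamma_1,\ldots,\gamma_r$, which you dispatch with a general-position argument; the paper covers the analogous point by citing the proof of \cite[Proposition~3.3.2]{AlarconForstnericLopez2021}, and a similar citation would make your step airtight. A nice added touch in your argument that the paper leaves implicit is the explicit conversion $g_j=\phi_j\cdot(f\circ\alpha_j)$, exploiting $\C^*$-invariance of the punctured cone to reduce the period adjustment along $\alpha_j$ (integrating against $\theta$) to the pure path-integral setting of Lemma \ref{lem:paths}, and then dividing by $\phi_j$ afterward. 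Your reduction of the unconditional first assertion to the furthermore part (via Lemma \ref{lem:extension} and $K$ a point) also matches the spirit of the paper's opening normalization.
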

We point out that the interpolation condition in this proposition (or in Lemma \ref{lem:3.3.1}) will not be required for further developments in this paper. In particular, it is not used to prove Theorem \ref{t:main-theorem}.
%
%
\begin{proof}
We may assume without loss of generality that $K\neq\varnothing$ is a smoothly bounded compact domain, possibly disconnected. 

We claim that $K$ can be assumed to be connected. Indeed, otherwise denote by $K_0,K_1,\ldots,K_N$, $N\ge 1$, the connected components of $K$. Attach to $K$ a union $E$ of $N$ mutually disjoint smooth embedded Jordan arcs $\gamma_1,\ldots,\gamma_N$ in $M$ meeting $K$ precisely at their two endpoints and such that their intersections with the boundary $bK$ of $K$ are transverse. Then the compact set $S=K\cup E\subset M$ is admissible \cite[Definition 1.12.9]{AlarconForstnericLopez2021}.  Further, 
choose each arc $\gamma_i$ to have one endpoint in $bK_i$ and the other endpoint in $bK_j$ for some $j\neq i$, in such a way that $S$ is connected.  Since this process creates no new loops (because $K$ has one more component than the arcs we are adding), $S$ is holomorphically convex in $M$. 
Note that the first homology group $H_1(S,\Z)$ of $S$ is isomorphic to $\Z^r$ for some integer $r\ge 0$ (see \cite[Lemma 1.12.10]{AlarconForstnericLopez2021}) and the inclusion $K\hookrightarrow S$ induces an isomorphism $H_1(K,\Z)\to H_1(S,\Z)$. Let $\ell'=\ell+r$. By \cite[proof of Proposition 3.3.2]{AlarconForstnericLopez2021}, there are smooth oriented embedded Jordan curves $C_{\ell+1},\ldots, C_{\ell'}$ in $S$ forming a homology basis of $S$ such that $\bigcup_{l=1}^{\ell'} C_l$ is holomorphically convex in $M$ and each $C_l$, $l=1,\ldots,\ell'$, contains a nontrivial arc disjoint from $\bigcup_{i\neq l} C_i$. By Lemma \ref{lem:3.3.1}, we may approximate $f$ uniformly on $S$ by a holomorphic map $f':M\to A$ agreeing with $f$ to any given order on any given finite set in $K$ and satisfying $\int_{C_l}f'\theta=\int_{C_l}f\theta$ for all $l=1,\ldots,\ell'$. In particular, $\int_Cf'\theta=\mathcal F(C)$ for every loop $C$ in a neighbourhood of $S$ by \eqref{eq:loop-F} and the fact that $C_{\ell+1},\ldots, C_{\ell'}$ generate $H_1(S,\Z)\cong H_1(K,\Z)$. Moreover, if the approximation of $f$ by $f'$ on $S$ is sufficiently close, then $f'$ and $f$ are homotopic on a neighbourhood of $S$ and there is a continuous map $f'':M\to A$ homotopic to $f$ such that $f''=f'$ on a neighbourhood of $S$ (and also outside a bigger neighbourhood of $S$ if desired). Therefore, replacing $f$ by $f''$ and $K$ by $S$, we may assume that $K$ is connected, as claimed. 

We now fix Jordan curves $C_{\ell+1},\ldots, C_{\ell'}$ in the connected compact domain $K$ forming a homology basis of $K$, where $\ell'=\ell+\dim(H_1(K,\Z))$, such that $C_1\cup\cdots\cup C_{\ell'}$ is holomorphically convex in $M$ and each $C_l$, $l=1,\ldots,\ell'$, contains a nontrivial arc disjoint from $\bigcup_{i\neq l} C_i$. We proceed by induction on the Euler characteristic $\chi(M\setminus K)$ of $M\setminus K$, which is a nonpositive integer. Assume for the basis of the induction that $\chi(M\setminus K)=0$, so $K$ is a strong deformation retract of $M$ and the inclusion $K\hookrightarrow M$ induces an isomorphism $H_1(K,\Z)\to H_1(M,\Z)$. Applying Lemma \ref{lem:3.3.1} again, we may approximate $f$ uniformly on a neighbourhood of $K$ by a holomorphic map $\tilde f:M\to A$ agreeing with $f$ to any given finite order on any given finite set in $K$, such that $\tilde f(M)$ is not contained in a hyperplane in $\C^n$ and $\int_{C_l}\tilde f\theta=\int_{C_l}f\theta$ for $l=1,\ldots,\ell'$. Since $C_{\ell+1},\ldots, C_{\ell'}$ generate the homology of $M$, this and \eqref{eq:loop-F} ensure that $\int_C\tilde f\theta=\mathcal F(C)$ for every loop $C$ in $M$. Moreover, if the approximation is close enough, then $\tilde f$ is homotopic to $f$ over $K$, and since $K$ is a strong deformation retract of $M$, $\tilde f$ is homotopic to $f$ over all of $M$ as well.

For the inductive step, assume now that the theorem holds whenever $K$ is a connected smoothly bounded compact domain with $-\chi(M\setminus K)=k$ for some integer $k\ge 0$, and suppose that $-\chi(M\setminus K)=k+1$.  Attach to $K$ a smooth embedded Jordan arc $E$ in $M$ meeting $K$ precisely at its two endpoints and such that its intersection with the boundary $bK$ of $K$ is transverse.  Then the compact set $S=K\cup E$ is connected and admissible. Choose $E$ so that $S$ is holomorphically convex in $M$ and $-\chi(M\setminus S)=k$, that is, there is a smooth embedded closed Jordan curve in $S$ which contains $E$ whose homology class belongs to $H_1(S,\Z)$ but not to $H_1(K,\Z)$. 
In view of \eqref{eq:loop-F}, we can use Lemma \ref{lem:paths} in order to deform $f:S\to A$ to a continuous map $f':S\to A$ such that $f'=f$ on a neighbourhood of $K$ and $\int_C f'\theta=\mathcal F(C)$ for every loop $C$ in $S$.  Since $f'$ is homotopic to $f$ on $S$, it admits a continuous extension to $M$ which is homotopic to $f$ on all of $M$ (we can in fact choose $f'$ to equal $f$ outside a neighbourhood of $E$ if desired).  Reasoning as above, we can assume that $f'$ is holomorphic on a neighbourhood of $S$, thereby reducing the proof to the case when $-\chi(M\setminus K)=k$. The induction hypothesis then completes the proof. 
\end{proof}

\begin{remark}
Proposition \ref{p:Oka} remains true with $M$ replaced by an arbitrary open Riemann surface and with the cone $A$ being Oka but not necessarily algebraically elliptic. This follows by a standard recursive application of the arguments in the proof we have given and we leave further details to the reader.
\end{remark}

%
%
\begin{proof}[Proof of Theorem \ref{t:sections}]
(i) $\Rightarrow$ (ii): If $s$ is sufficiently close on a neighbourhood $U$ of $K$ to a regular section $t$ of $\mathcal A$ on $M$, then $s$ and $t$ are homotopic through continuous sections of $\mathcal A \,\vert\, U$.

(ii) $\Rightarrow$ (i):  We proceed in two steps. The first one is given by the following claim.
\begin{claim}\label{cl:no-poles}
If (ii) holds, then $s$ can be uniformly approximated on a neighbourhood of $K$ by regular sections $t$ of $\mathcal A$ on $M$ such that $\int_C t=\mathcal F(C)$ for every loop $C$ in $M$ and $\int_{C_l} t=\int_{C_l} s$ for $l=1,\ldots,\ell$.
\end{claim}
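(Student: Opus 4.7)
The plan combines three ingredients: (a) a holomorphic approximation with exact periods, via Proposition \ref{p:Oka}; (b) Forstneri\v c's algebraic Oka approximation theorem \cite[Theorem 3.1]{Forstneric2006AJM} for sections of algebraic fibre bundles with algebraically elliptic fibres; and (c) a period-dominating spray together with the implicit function theorem, to correct the periods after the algebraic step. The main obstacle is (c), since the algebraic Oka principle preserves periods only up to a small error and extra flexibility is needed to realise the prescribed periods exactly.

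First, fix a nowhere-vanishing holomorphic 1-form $\theta$ on $M$ (Gunning--Narasimhan); this identifies continuous sections of $\mathcal A$ with continuous maps $M\to A$ via $s \leftrightarrow s/\theta$. By hypothesis (ii) and Lemma \ref{lem:extension}, the map $f := s/\theta$ extends to a continuous map $M\to A$ homotopic to a regular map. Apply Proposition \ref{p:Oka} to obtain a holomorphic map $\tilde f\colon M\to A$ approximating $f$ uniformly on a neighbourhood of $K$, satisfying $\int_C \tilde f\theta = \mathcal F(C)$ for every loop $C$ in $M$, and $\int_{C_l} \tilde f\theta = \int_{C_l} s$ for $l=1,\ldots,\ell$. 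Setting $\tilde s := \tilde f\theta$ yields a holomorphic section of $\mathcal A$ on $M$ with exactly the required periods, uniformly close to $s$ near $K$, and (by closeness to $s$ and by (ii)) homotopic to a regular section of $\mathcal A$ on $M$.

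Next, the subbundle $\mathcal A \subset (T^*M)^{\oplus n}$ is an algebraic fibre bundle with algebraically elliptic fibre $A$ over the smooth affine curve $M$. Enlarge $K$ to a smoothly bounded compact domain $L\supset K$ containing the arcs $C_l$ and realising a homology basis $\gamma_1,\ldots,\gamma_r$ of $H_1(M,\Z)$, so that all periods of sections on $M$ are determined by their values on $L$. Forstneri\v c's algebraic approximation theorem \cite[Theorem 3.1]{Forstneric2006AJM} (see also \cite[Theorem 6.15.3]{Forstneric2017E}), applied to $\tilde s$ with the homotopy furnished by the previous paragraph, produces regular sections of $\mathcal A$ on $M$ uniformly close to $\tilde s$ on $L$; however, their periods only approximate, rather than equal, the prescribed ones.

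To correct the periods, construct a holomorphic period-dominating spray $\tilde s_w$ of $\tilde s$, parameterised by $w$ in a small ball about $0 \in \C^N$ with $N = n(r+\ell)$, with $\tilde s_0 = \tilde s$, supported in a finite union of small discs contained in $L\setminus K$, and such that the differential at $w=0$ of the period map $w \mapsto (\int_{\gamma_j} \tilde s_w,\, \int_{C_l} \tilde s_w)_{j,l}$ is surjective onto $(\C^n)^{r+\ell}$. Such sprays are built by the standard technique of summing compactly supported variations concentrated near arcs disjoint from $K$, as in \cite{AlarconForstneric2014IM, AlarconCastro-Infantes2019APDE, ForstnericLarusson2019CAG}. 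Apply Forstneri\v c's algebraic approximation to the whole spray, viewed as a single regular section of $\mathcal A$ with parameters in $\C^N$, to obtain a regular spray $t_w$ approximating $\tilde s_w$ uniformly on $L$ for $w$ in a small ball. By continuity and the surjectivity of the differential at $0$, the implicit function theorem yields a parameter $w_0$ close to $0$ for which $t_{w_0}$ realises the prescribed periods exactly. Setting $t := t_{w_0}$, which still approximates $s$ near $K$ since the spray is supported away from $K$ and $t_0 \approx \tilde s \approx s$ there, completes the proof of the claim.
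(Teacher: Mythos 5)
Your proposal is correct and follows essentially the same strategy as the paper: convert sections to maps via a nowhere-vanishing holomorphic $1$-form $\theta$, use Lemma \ref{lem:extension} and Proposition \ref{p:Oka} to produce a holomorphic $\tilde s = \tilde f\theta$ with the prescribed periods that is homotopic over $M$ to a regular section, then embed $\tilde f$ as the core of a period-dominating spray, apply Forstneri\v c's algebraic approximation theorem to the whole spray, and recover exact periods from domination via the implicit function theorem. The only detail you gloss over is that before invoking the algebraic theorem, the spray must be extended from a small parameter ball to a globally defined holomorphic spray on all of $\C^N\times M$ (the paper does this by first extending continuously and then applying the basic Oka property of $A$ with approximation on $W\times L$); your appeal to the standard references implicitly covers this, so the gap is one of exposition rather than substance.
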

Note that the section $t$ given by this claim satisfies (i) except it may not have a pole at each end of $M$.

\begin{proof}
Let $s_0$ be a regular section of $\mathcal A$ on $M$ that is homotopic to $s$ over a neighbourhood of $K$; it exists by (ii). Then $s_0$ is homotopic over all of $M$ to a continuous section $s'$ of $\mathcal A$ on $M$ that equals $s$ on a smaller neighbourhood of $K$ (see Lemma \ref{lem:extension}).  So by replacing $s$ by $s'$, we can assume that $s$ is continuous on $M$ and $s$ and $s_0$ are homotopic on all of $M$. Fix a holomorphic $1$-form $\theta$ vanishing nowhere on $M$ and write 
\begin{equation}\label{eq:ff0}
	f=s/\theta:M\to A \quad\text{and}\quad f_0=s_0/\theta:M\to A.
\end{equation}
Note that $f$ and $f_0$ are homotopic, $f_0$ is holomorphic, and $f$ is continuous on $M$ and holomorphic on a neighbourhood of $K$. By Proposition \ref{p:Oka}, $f$ can be deformed to a holomorphic map $\tilde f:M\to A$ that approximates $f$ uniformly on a neighbourhood of $K$, such that $\tilde f(M)$ is not contained in a hyperplane in $\C^n$,
\begin{equation}\label{eq:tildef-1}
	\int_C\tilde f\theta=\mathcal F(C)\quad \text{for every loop $C$ in $M$} ,
\end{equation}
and
\begin{equation}\label{eq:tildef-2}
	\int_{C_l}\tilde f\theta=\int_{C_l}f\theta\quad \text{for $l=1,\ldots,\ell$}
\end{equation}
(see \eqref{eq:periods-s}).  In particular, $\tilde f:M\to A$ is homotopic to $f_0$ on $M$ and is nonflat in the sense of \cite[Definition 3.1]{AlarconCastro-Infantes2019APDE} and nondegenerate in the sense of \cite[Definition 2.2]{AlarconForstneric2014IM}. Note that $\tilde f\theta$ is a holomorphic section of $\mathcal A$ on $M$ satisfying the conclusion of the claim, except it may not be regular.

Let $L$ be a connected smoothly bounded compact domain in $M=\overline M\setminus\{p_1,\ldots,p_m\}$ with $K\subset\mathring L$ and $\overline M\setminus \overset\circ L = D_1\cup\cdots\cup D_m$, where $D_1,\ldots, D_m$ are mutually disjoint smoothly bounded closed discs centred at $p_1,\ldots, p_m$, respectively. Recall that the first homology group $H_1(M,\Z)$ is isomorphic to $\Z^r$ for some $r\ge 0$ and that the inclusion $L\hookrightarrow M$ induces an isomorphism $H_1(L,\Z)\to H_1(M,\Z)$. Write $\ell'=\ell+r$. By \cite[proof of Proposition 3.3.2]{AlarconForstnericLopez2021}, there are smooth oriented embedded Jordan curves $C_{\ell+1},\ldots, C_{\ell'}$ in $L$ forming a homology basis of $L$ such that 
\[
	\Gamma=C_1\cup\cdots\cup C_{\ell'}
\]
is holomorphically convex in $M$ and each $C_l$, $l=1,\ldots,\ell'$, contains a nontrivial arc disjoint from $\bigcup_{i\neq l} C_i$.
Consider the {\em period-interpolation map} 
\[
	\mathscr P=(\mathscr P_1,\ldots,\mathscr P_{\ell'}): \mathscr C(\Gamma,\C^n)\to(\C^n)^{\ell'}=\C^{\ell' n}
\]
 defined by
\begin{equation}\label{eq:p-map}
	\mathscr P_l(h)=\int_{C_l}h\theta\in\C^n,\quad h\in \mathscr C(\Gamma,\C^n),\quad l=1,\ldots,\ell'.
\end{equation}
Arguing as in \cite[proof of Lemma 3.3.1, Step 2]{AlarconForstnericLopez2021} but with the punctured null quadric replaced by the cone $A$ (see also \cite[Lemma 4.2]{AlarconCastro-Infantes2019APDE} and \cite[Lemma 5.1]{AlarconForstneric2014IM}), we can find an open ball $V$ centred at the origin $0$ in some $\C^k$ and a holomorphic {\em $\mathscr P$-dominating spray} $\phi: V\times M\to A$ with {\em core}\, $\tilde f$, that is, a holomorphic map such that $\phi(0,\cdot)=\tilde f$ and
\[
	\frac{\partial}{\partial \zeta}\Big|_{\zeta=0}\mathscr P\big(\phi(\zeta,\cdot)\big): (\C^n)^{\ell'}\to (\C^n)^{\ell'}\quad \text{is an isomorphism}.
\]
(Note that \cite[Lemma 4.2]{AlarconCastro-Infantes2019APDE} and \cite[Lemma 5.1]{AlarconForstneric2014IM} deal with the case when $M$ is a compact bordered Riemann surface, but the same proof applies when $M$ is an arbitrary open Riemann surface since we only need to control the periods on the finitely many arcs and curves $C_1,\ldots, C_{\ell'}$.)  We may assume that the spray $\phi$ extends continuously to a map $\phi:\C^k\times M\to A$.  Since $A$ satisfies the basic Oka property with approximation and interpolation (see \cite[Corollary 5.4.5]{Forstneric2017E}), $\phi$ may be deformed to a holomorphic spray $\sigma:\C^k\times M\to A$ such that $\sigma(0,\cdot)=\tilde f$, with sufficiently close approximation on a neighbourhood of $\{0\}\times L\supset \{0\}\times \Gamma$ to ensure that $\sigma$ is still $\mathscr P$-dominating:
\begin{equation}\label{eq:P-sigma}
	\frac{\partial}{\partial \zeta}\Big|_{\zeta=0}\mathscr P\big(\sigma(\zeta,\cdot)\big): (\C^n)^{\ell'}\to (\C^n)^{\ell'}\quad \text{is an isomorphism}.
\end{equation}
Using $\theta$, we turn $\sigma$ into a holomorphic spray $\sigma\theta$ of sections of $\mathcal A$ on $M$ parametrised by $\C^k$ such that $\sigma(0,\cdot)\theta=\tilde f\theta$.  We may view the spray $\sigma\theta$ as a holomorphic section of the pullback of $\mathcal A$ by the projection $\C^k\times M\to M$.  By Forstneri\v c's algebraic approximation theorem \cite[Theorem 3.1]{Forstneric2006AJM} (see also \cite[Theorem 6.15.3]{Forstneric2017E}), since $A$ is algebraically elliptic and $\tilde f\theta$ is homotopic to the regular section $f_0\theta=s_0$ of $\mathcal A$ on $M$ (see \eqref{eq:ff0}), we may uniformly approximate the spray $\sigma\theta$ over $W\times L$ by a regular spray $\tau$ of sections of $\mathcal A$ on $M$ parametrised by $\C^k$, where $W$ is a closed ball centred at the origin in $\C^k$. By \eqref{eq:P-sigma}, if the approximation is close enough then near the origin is a parameter $\zeta_0\in\C^k$ such that 
\begin{equation}\label{eq:periods-t}
	\Pscr\big(\tau(\zeta_0)/\theta\big)=\Pscr\big(\sigma(0,\cdot)\big)=\Pscr\big(\tilde f).
\end{equation}
Then $t=\tau(\zeta_0)$ is a regular section of $\mathcal A$ on $M$ which is close to $\tilde f\theta =\sigma(0,\cdot)\theta$ on $L$ and hence to $s=f\theta$ on $K$; see \eqref{eq:ff0}. It follows from \eqref{eq:p-map} and \eqref{eq:periods-t} that 
\begin{equation}\label{eq:tildef-3}
	\int_{C_l} t=\int_{C_l}\tilde f\theta\quad \text{for }l=1,\ldots,\ell'.
\end{equation}
This, \eqref{eq:ff0}, and \eqref{eq:tildef-2} show that
$\int_{C_l} t=\int_{C_l} f\theta=\int_{C_l} s$ for $l=1,\ldots,\ell$.
Finally, since the curves $C_{\ell+1},\ldots, C_{\ell'}$ form a basis of $H_1(M,\Z)$,  \eqref{eq:tildef-1} and \eqref{eq:tildef-3} imply that 
$\int_C t =\int_C\tilde f\theta=\mathcal F(C)$ for every loop $C$ in $M$.
\end{proof}

To complete the proof that (ii) $\Rightarrow$ (i), it remains to see that we can find a regular section $t$ as in Claim \ref{cl:no-poles} with an effective pole at each end of $M$. For this, we adapt the argument in the proof of Proposition \ref{pro:large} to our current more general framework.  By Claim \ref{cl:no-poles}, we may assume that $s$ extends to a regular section of $\mathcal A$ on $M$.
Let $L\Supset K$ be a smoothly bounded compact domain in $M=\overline M\setminus\{p_1,\ldots,p_m\}$ such that $\overline M\setminus \overset\circ L = D_1\cup\cdots\cup D_m$, where $D_1,\ldots, D_m$ are mutually disjoint smoothly bounded closed discs centred at $p_1,\ldots, p_m$, respectively.  In particular, $bL = bD_1 \cup\cdots\cup bD_m$. Fix a holomorphic $1$-form $\vartheta$ vanishing nowhere on a neighbourhood of $D=D_1\cup\cdots\cup D_m\supset bL$. Also choose for each $i=1,\ldots,m$ a smoothly bounded closed disc $\Delta_i\subset \mathring D_i\setminus\{p_i\}$, set $\Delta=\Delta_1\cup\cdots\cup\Delta_m$, and consider a holomorphic section $\tilde s$ of $\mathcal A$ on a neighbourhood of $L\cup\Delta$ such that $\tilde s=s$ on a neighbourhood of $L$ and the map
\begin{equation}\label{eq:g}
	g=(g_1,\ldots,g_n)=\frac{\tilde s}{\vartheta}:\Delta\cup bL\to A
\end{equation}
satisfies
\begin{equation}\label{eq:g1}
	 \min\{|g_k(p)|: p\in \Delta\}>\max\{|g_k(p)|: p\in bL\}\quad \text{for }k=1,\ldots,n.
\end{equation}
(Simply define $\tilde s$ on $\Delta$ as 
$\tilde s=g\vartheta$ where $g=(g_1,\ldots,g_n):\Delta\to A$ is a holomorphic map with $\min\{|g_k(p)|: p\in \Delta\}>\max\{|(s/\vartheta)(p)| : p\in bL\}$ for $k=1,\ldots,n$; note that $\Delta$ is compact, $L\cap\Delta=\varnothing$, and $g=s/\vartheta$ on $bL$.  We can even choose $g$ to be constant on $\Delta$.) Since $\Delta$ is simply connected and $\tilde s=s$ on $L$, $\tilde s$ and the regular section $s$ of $\mathcal A$ on $M$ are homotopic over a neighbourhood of $L\cup\Delta$, so condition (ii) is satisfied by the holomorphic section $\tilde s$ of $\mathcal A$ on $L\cup\Delta$.  In view of Claim \ref{cl:no-poles}, $\tilde s$ can be uniformly approximated on a neighbourhood of the holomorphically convex compact set $L\cup\Delta\subset M$ by a regular section $t$ of $\mathcal A$ on $M$ such that $\int_C t=\mathcal F(C)$ for every loop $C$ in $M$ and $\int_{C_l} t=\int_{C_l} s$ for $l=1,\ldots,\ell$.  Take such a section $t$ so close to $\tilde s$ on $L\cup\Delta$ that the holomorphic map $h=(h_1,\ldots,h_n)=t/\vartheta:D\setminus\{p_1,\ldots,p_m\}\to A$ satisfies
\begin{equation}\label{eq:h1}
	\min\Big\{|h_k(p)|: p\in \Delta=\bigcup_{i=1}^m\Delta_i\Big\}>\max\Big\{|h_k(p)|: p\in bL=\bigcup_{i=1}^m bD_i\Big\}; 
\end{equation}
for $k=1,\ldots,n$;
see \eqref{eq:g} and \eqref{eq:g1}. For each $i=1,\ldots,m$ and $k=1,\ldots,n$, since $t=(t_1,\ldots,t_n)$ is regular on $M$ and $\vartheta$ is holomorphic at $p_i$, either $h_k=t_k/\vartheta$ has an effective pole at $p_i$ or $h_k$ is holomorphic at $p_i$. To complete the proof it now suffices to check that $h_k$ has a pole at $p_i$ for all $i=1,\ldots,m$ and $k=1,\ldots,n$. Indeed, since $\vartheta$ is holomorphic and nowhere vanishing on $D\supset\{p_1,\ldots, p_m\}$, this would imply that $t_k|_{D\setminus\{p_1,\ldots,p_m\}}=h_k\vartheta$ has a pole at $p_i$, $i=1,\ldots,m$, and so $t_k$ would have a pole at each end $p_i$ of $M$ as well. So fix $i\in\{1,\ldots,m\}$ and $k=1,\ldots,n$, and suppose that $h_k$ is holomorphic at $p_i$. Then $h_k$ is holomorphic on the smoothly bounded closed disc $D_i$, violating the maximum modulus principle, since $\Delta_i\subset\mathring D_i$ and $\min\{|h_k(p)|: p\in \Delta_i\}>\max\{|h_k(p)|: p\in bD_i\}$ by \eqref{eq:h1}. Therefore, each $h_k$ has an effective pole at each end $p_i$ of $M$. 
\end{proof}

Here is a more precise version of Theorem \ref{t:cohomology}.
%
%
\begin{theorem}\label{t:cohomology-2}
Let $A$, $M$, and $\mathcal A$ be as in Theorem \ref{t:main-theorem} and assume that $t_0$ is a regular section of $\mathcal A$ on $M$. Then for any group homomorphism $\mathcal F:H_1(M,\Z)\to\C^n$ there is a regular section $t=(t_1,\ldots,t_n)$ of $\mathcal A$ on $M$, homotopic to $t_0$, such that all the components $1$-forms $t_1,\ldots,t_n$ have an effective pole at each end of $M$ and $\int_C t=\mathcal F(C)$ for every loop $C$ in $M$.
\end{theorem}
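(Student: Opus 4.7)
The plan is to apply Theorem \ref{t:sections} after first deforming $t_0$ to a holomorphic section of $\mathcal A$ on $M$ whose periods already equal $\mathcal F$. Since Theorem \ref{t:sections} only provides approximation on a prescribed compact set, that approximation will have to be performed on a compact set that is a strong deformation retract of $M$, in order to conclude that the resulting regular section is globally homotopic to $t_0$.

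First I would fix a nowhere-vanishing holomorphic $1$-form $\theta$ on $M$ and form the holomorphic map $f_0 = t_0/\theta : M \to A$; this is well-defined because $A$ is a cone and $\theta$ has no zeros. Applying the first assertion of Proposition \ref{p:Oka} to $f_0$, I would deform it through a homotopy of continuous maps $M \to A$ to a holomorphic map $\tilde f : M \to A$ with image not contained in a hyperplane of $\C^n$ and satisfying $\int_C \tilde f \theta = \mathcal F(C)$ for every loop $C$ in $M$. Then $\tilde s := \tilde f \theta$ is a holomorphic section of $\mathcal A$ on $M$, homotopic to $t_0$, with the prescribed periods.

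Next I would choose a smoothly bounded connected compact domain $L \subset M = \overline M \setminus \{p_1,\ldots,p_m\}$ such that $\overline M \setminus \mathring L$ is a disjoint union of smoothly bounded closed discs centred at the ends $p_1,\ldots,p_m$. Such an $L$ is holomorphically convex in $M$ and a strong deformation retract of $M$. I would then invoke Theorem \ref{t:sections} with $K = L$, $s = \tilde s|_U$ on a neighbourhood $U$ of $L$, $\ell = 0$, and the given $\mathcal F$. Hypothesis (ii) is immediate because $\tilde s$ is globally homotopic to the regular section $t_0$, and hypothesis \eqref{eq:periods-s} holds by construction. The conclusion, combined with the final clause of Theorem \ref{t:sections}, produces a regular section $t = (t_1,\ldots,t_n)$ of $\mathcal A$ on $M$, uniformly close to $\tilde s$ on $L$, such that every component $t_k$ has an effective pole at each end and $\int_C t = \mathcal F(C)$ for every loop $C$ in $M$.

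The last step will be to verify that $t$ is homotopic to $t_0$ on all of $M$. Trivializing $\mathcal A$ via $\theta$ identifies continuous sections of $\mathcal A$ with continuous maps into $A$; since $L$ is a deformation retract of $M$, restriction induces a bijection between homotopy classes of continuous sections of $\mathcal A$ on $M$ and on $L$. Taking the approximation $t \approx \tilde s$ on $L$ close enough (say within a tubular neighbourhood of $A$ in $\C^n$) forces $t|_L$ and $\tilde s|_L$ to be homotopic as maps into $A$; hence $t$ and $\tilde s$ are homotopic on $M$, and therefore $t$ is homotopic to $t_0$, as required. The main obstacle here is exactly the production of the intermediate holomorphic section $\tilde s$ with matching periods, so that Theorem \ref{t:sections} can be applied on a compact set that is large enough to control the global homotopy class; this is precisely what Proposition \ref{p:Oka} supplies.
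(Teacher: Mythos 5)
Your proof is correct and follows essentially the same route as the paper: produce a holomorphic section of $\mathcal A$ homotopic to $t_0$ with the prescribed periods, then apply Theorem \ref{t:sections} on a compact holomorphically convex set that is a strong deformation retract of $M$ so that closeness of the approximation upgrades to global homotopy equivalence with $t_0$. The only cosmetic difference is that you invoke the first assertion of Proposition \ref{p:Oka} as a black box, whereas the paper performs the corresponding period-adjustment directly via Lemma \ref{lem:paths} on a one-dimensional homology basis $\Gamma$ followed by Lemma \ref{lem:3.3.1}, which is precisely how Proposition \ref{p:Oka} is itself established.
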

\begin{proof}
Let $\theta$ be a holomorphic $1$-form vanishing nowhere on $M$ and consider the holomorphic map $f_0=t_0/\theta:M\to A$.
Recall that $H_1(M,\Z)\cong\Z^r$ for some integer $r\ge 0$. Fix a point $p_0\in M$ and let $C_1,\ldots,C_r$ be a family of smooth oriented embedded Jordan curves forming a basis of $H_1(M,\Z)$ such that $C_i\cap C_j=\{p_0\}$ for all $i\neq j$ and the compact set $\Gamma=C_1\cup\cdots\cup C_r$ is holomorphically convex in $M$, and hence a strong deformation retract of $M$. By Lemma \ref{lem:paths}, we can deform $f_0|_\Gamma$ outside a neighbourhood of $p_0$ to a continuous map $f:\Gamma\to A$ with $\int_{C_i}f\theta=\mathcal F(C_i)$ for $i=1,\ldots,r$. Then Lemma \ref{lem:3.3.1} enables us to approximate $f$ uniformly on $\Gamma$ by a holomorphic map $f':M\to A$ with $\int_C f'\theta=\mathcal F(C)$ for every loop $C$ in $M$. If the approximation is close enough, then $f'$ is homotopic to $f$ on $\Gamma$, so $f'$ and $f_0$ are homotopic on $M$. Set $s=f'\theta$, which is a holomorphic section of $\mathcal A$ on $M$, homotopic to the given regular section $t_0=f_0\theta$. Let $K\subset M$ be a smoothly bounded compact domain that is a strong deformation retract of $M$. By Theorem \ref{t:sections}, $s$ can be approximated uniformly on $K$ by a regular section $t=(t_1,\ldots,t_n)$ of $\mathcal A$ on $M$, such that $t_k$ has an effective pole at each end of $M$ for all $k=1,\ldots,n$, and $\int_C t=\mathcal F(C)$ for every loop $C$ in $M$. If the approximation on $K$ is close enough, then $t$ is homotopic to $s$, hence to $t_0$.
\end{proof}


\section{Regular directed immersions}
\label{sec:proofs}

\noindent
In this section we first derive Theorem \ref{t:main-theorem} on regular immersions directed by algebraically elliptic cones from Theorem \ref{t:sections}.  Then we prove Corollaries \ref{c:second-corollary} and~\ref{c:main-corollary}.
%
%
\begin{proof}[Proof of Theorem \ref{t:main-theorem}]
By Corollary \ref{co:large-very-good}, we may assume that $n\ge 3$ and $A\neq \C_*^n$. We proceed in three steps.

(ii) $\Rightarrow$ (i) and (iv) $\Rightarrow$ (iii) are obvious.

(i) $\Rightarrow$ (iii) and (ii) $\Rightarrow$ (iv):  
If $u$ is near a regular $A$-immersion $v:M\to\C^n$ on a neighbourhood of $K$, then $du$ is near $dv$ on a smaller neighbourhood $U$ of $K$.  If $du$ and $dv$ are sufficiently close on $U$, then they are homotopic through continuous sections of $\mathcal A \,\vert\, U$.  Clearly, $dv$ is exact, and since $v$ is a regular $A$-immersion, $dv$ is a regular section of $\mathcal A$. If in addition $v$ is proper, then $v$, and hence $dv$, has an effective pole at each end of $M$.

(iii) $\Rightarrow$ (ii): 
Assume without loss of generality that $K$ is a smoothly bounded compact domain, possibly disconnected, and let $\Lambda=\{q_1,\ldots,q_\ell\} \subset K$ be a finite set. We may assume that $\Lambda\subset\mathring K$. 

Set $s=du$, so $s$ is an exact holomorphic section of $\mathcal A$ on a neighbourhood of $K$:
\begin{equation}\label{eq:s-exact-K}
	\int_C s=0\quad \text{for every loop $C$ in a neighbourhood of $K$}.
\end{equation}

Assume first that $K$ is connected. Fix a point $p_0\in \mathring K\setminus \Lambda$. It is clear that 
\begin{equation}\label{eq:u(p)}
	u(p)=u(p_0)+\int_{p_0}^p du\quad \text{for any point $p\in K$},
\end{equation}
where the integral is computed over any oriented arc in $K$ connecting $p_0$ and $p$.
Let $C_1,\ldots,C_\ell$ be smooth oriented embedded arcs in $\mathring K$ such that $C_i\cap C_j=\{p_0\}$ for all $i,j\in\{1,\ldots,\ell\}$, $i\neq j$, and the endpoints of $C_l$ are $p_0$ and $q_l$, $l=1,\ldots,\ell$. By virtue of (iii), we may use Theorem \ref{t:sections} to approximate $s$ uniformly on a neighbourhood of $K$ by a regular section $t=(t_1,\ldots,t_n)$ of $\mathcal A$ on $M$, such that $t_k$ has an effective pole at each end of $M$ for all $k=1,\ldots,n$,
\begin{equation}\label{eq:t-exact}
	\int_C t=0\quad \text{for every loop $C$ in $M$}
\end{equation}
(see \eqref{eq:s-exact-K} and take $\mathcal F=0$ everywhere on $H_1(M,\Z)$) and
\begin{equation}\label{eq:t-interpolates}
	\int_{C_l} t=\int_{C_l} s\quad \text{for $l=1,\ldots,\ell$}.
\end{equation}
By \eqref{eq:t-exact}, $t$ is exact. It is also regular, and hence the map $v=(v_1,\ldots,v_n):M\to\C^n$ given by
\[
	v(p)=u(p_0)+\int_{p_0}^p t,\quad p\in M,
\]
is a regular $A$-immersion. It is clear that $v$ is close to $u$ uniformly on $K$, since $K$ is connected and $t$ is close to $s=du$ uniformly on $K$; see \eqref{eq:u(p)}. Moreover, since $t_k$ has an effective pole at each end of $M$, so does $v_k$, so $v_k:M\to\C$ is proper for all $k=1,\ldots,n$. In particular, $v:M\to\C^n$ is proper as well. Finally, by \eqref{eq:t-interpolates}, $v=u$ on $\Lambda$. This proves (ii) in the case when $K$ is connected.

Assume now that the compact domain $K$ is disconnected.  Denote by $K_0,\ldots,K_N$, $N\ge 1$, the connected components of $K$.  As in the first part of the proof of Proposition \ref{p:Oka}, attach to $K$ the union $E=\gamma_1\cup\cdots\cup\gamma_N$ of $N$ mutually disjoint smooth embedded Jordan arcs in $M$, such that each arc $\gamma_i$ 
has one endpoint in $bK_i$ and the other endpoint in $bK_j$ for some $j\neq i$, so as to obtain a connected holomorphically convex compact admissible set $S=K\cup E\subset M$.  Since this procedure does not create new loops, the inclusion $K\hookrightarrow S$ induces an isomorphism $H_1(K,\Z)\to H_1(S,\Z)$. In this case, condition \eqref{eq:s-exact-K} implies that \eqref{eq:u(p)} holds for any pair of points $p$ and $p_0$ in the same connected component of $K$. In view of (iii), there is a regular section $t$ of $\mathcal A$ on $M$ which is homotopic to $s$ over a neighbourhood of $K$. Fix a holomorphic $1$-form $\theta$ vanishing nowhere on $M$ and consider the holomorphic maps
\begin{equation}\label{eq:fg}
	f=s/\theta:K\to A \quad \text{and}\quad g=t/\theta:M\to A
\end{equation}
which are homotopic over a neighbourhood of $K$. Extend $f$ to a continuous map $f:S\to A$ which is holomorphic on a neighbourhood of $K$ such that $f$ and $g$ are homotopic on $S$.  
Fix orientations for the arcs $\gamma_1,\ldots,\gamma_N$ and choose points $\mu_1,\ldots,\mu_N\in\C^n$ which will be specified later. Reasoning as in the final part of the proof of 
Proposition \ref{p:Oka}, we can now use 
Lemma \ref{lem:paths} in order to deform $f:S\to A$ to a continuous map $f':S\to A$ such that $f'=f$ on a neighbourhood of $K$ and 
\begin{equation}\label{eq:f'}
	\int_{\gamma_i} f'\theta=\mu_i\quad \text{for $i=1,\ldots,N$}.
\end{equation} 
Then $f'$ and $g$ are homotopic over $S$. Moreover, the continuous $1$-form $f'\theta$ on $S$ is exact, since so is $f\theta=s=du$ on $K$ (see \eqref{eq:s-exact-K}), $f'=f$ on a neighbourhood of $K$, and there is no closed curve in $S$ whose homology class belongs to $H_1(S,\Z)$ but not to $H_1(K,\Z)$. Thus, fixing a point $p_0\in \mathring K_0\setminus \Lambda$, $f'\theta$ integrates to a continuous map $u':S\to\C^n$ given by
\begin{equation}\label{eq:u'}
	u'(p)=u(p_0)+\int_{p_0}^p f'\theta,\quad p\in S.
\end{equation}
By \eqref{eq:u(p)}, \eqref{eq:u'}, and since $f'=f$ on $K$, we have that $u'=u$ in a neighbourhood of $K_0$, while $u'-u$ is constant on a neighbourhood of $K_i$ for $i=1,\ldots,N$. In fact, suitably choosing the constants $\mu_i$ in \eqref{eq:f'}, we can ensure that 
\begin{equation}\label{eq:u'=u}
	\text{$u'=u$ on a neighbourhood of $K=K_0\cup K_1\cup\cdots\cup K_N$}.
\end{equation} 
Choose smooth oriented embedded Jordan arcs $C_1,\ldots,C_\ell$ in $S$ such that $C_1\cup\cdots\cup C_\ell$ is holomorphically convex in $M$ and each arc $C_l$ has initial point $p_0$ and final point $q_l\in\Lambda$, $l=1,\ldots,\ell$. It follows from \eqref{eq:u'} and \eqref{eq:u'=u} that 
\begin{equation}\label{eq:Clf'}
	\int_{C_l}f'\theta=u(q_l)-u(p_0)\quad \text{for } l=1,\ldots,\ell.
\end{equation}
Set $\ell'=\ell+\dim(H_1(S,\Z))$. By \cite[proof of Proposition 3.3.2]{AlarconForstnericLopez2021}, there are smooth oriented embedded Jordan arcs $C_{\ell+1},\ldots, C_{\ell'}$ in $S$ forming a homology basis of $S$ such that $\bigcup_{l=1}^{\ell'} C_l$ is holomorphically convex in $M$ and each arc $C_l$, $l=1,\ldots,\ell'$, contains a nontrivial arc disjoint from $\bigcup_{i\neq l} C_i$. By Lemma \ref{lem:3.3.1}, we can approximate $f'$ uniformly on $S$ by a holomorphic map $F:M\to A$ such that
\begin{equation}\label{eq:periods-F}
	\int_{C_l} F\theta=\int_{C_l} f'\theta\quad \text{for } l=1,\ldots,\ell'.
\end{equation}
Since $f'\theta$ is exact on $S$ and $C_{\ell+1},\ldots, C_{\ell'}$ generate $H_1(S,\Z)$, it follows that $F\theta$ is exact on a connected neighbourhood $U$ of $S$, and hence it integrates to an $A$-immersion $v:U\to\C^n$  given by
\begin{equation}\label{eq:vuF}
	v(p)=u(p_0)+\int_{p_0}^pF\theta,\quad p\in U.
\end{equation}
Since $F$ is close to $f'$ on the connected compact set $S$, \eqref{eq:u'} and \eqref{eq:vuF} ensure that $v$ is close to $u'$ on a neighbourhood of $S$, and hence to $u$ on a neighbourhood of $K$; see \eqref{eq:u'=u}. Moreover, by \eqref{eq:Clf'} and \eqref{eq:periods-F}, $v=u$ on $\Lambda$. Finally, if the approximation of $f'$ by $F$ is close enough, then $F$ is homotopic to $f'$, and hence to $g$, over $S$.  Thus $F\theta$ is homotopic to the regular section $t=g\theta$ of $\mathcal A$ on $M$ over a neighbourhood of $S$; see \eqref{eq:fg}. This reduces the proof to the already settled case when $K$ is connected. 
\end{proof}

%
%
\begin{proof}[Proof of Corollary \ref{c:second-corollary}]
The implications {\rm (ii)} $\Rightarrow$  {\rm (iv)} $\Rightarrow$ {\rm (i)} $\Rightarrow$ {\rm (iii)} are obvious.  To show that {\rm (iii)} implies {\rm (ii)}, we simply apply Theorem \ref{t:main-theorem} to any holomorphic $A$-immersion $u:K\to\C^n$, where $K\subset M$ is a simply connected, smoothly bounded compact neighbourhood of $\Lambda$, such that $u$ extends the given map $\Lambda\to\C^n$. Such an $A$-immersion trivially exists; note that a translate of an $A$-immersion is still an $A$-immersion.  Since $K$ is simply connected, there is only one homotopy class of continuous sections of $\mathcal A\,\vert\, U$ on a suitable neighbourhood of $K$, hence $du$ is homotopic to the restriction of the regular section of $\mathcal A$ on $M$ given by (iii) and the theorem applies. 
\end{proof}

%
%
\begin{proof}[Proof of Corollary \ref{c:main-corollary}]
%
(ii) $\Rightarrow$ (i) and (iv) $\Rightarrow$ (iii) are obvious, while (i) $\Rightarrow$ (iii) and (ii) $\Rightarrow$ (iv) are seen as in the proof of Corollary \ref{co:large-very-good}.

\noindent
(iii) $\Rightarrow$ (ii):  Let $u:K\to\C^n$ be a holomorphic $A$-immersion as in (i).  By \cite[Theorem 1.3]{AlarconCastro-Infantes2019APDE}, $u$ can be uniformly approximated on $K$ by holomorphic $A$-immersions $v:M\to\C^n$ agreeing with $u$ on any given finite subset of $K$.  Assuming (iii), apply Theorem \ref{t:main-theorem} to approximate and interpolate $v$ and hence $u$ on $K$ to obtain (ii).
\end{proof}
%
%
\begin{remark}\label{rem:vj}
An inspection of the proof of Theorem \ref{t:main-theorem} shows, assuming the equivalent conditions hold, that the approximation in (ii) can be done by $A$-immersions $M\to\C^n$ all of whose component functions are proper regular functions on $M$. Likewise, the exact regular section of $\mathcal A$ on $M$ in (iv) can be chosen so that all its component $1$-forms have an effective pole at each end of $M$. The same applies to conditions (ii) and (iv) in Corollaries \ref{c:second-corollary} and \ref{c:main-corollary}.
\end{remark}


\section{Directed harmonic maps}\label{sec:harmonic}

\noindent
Our method of proof yields a result analogous to Theorem \ref{t:main-theorem} for directed harmonic maps into $\R^n$. If $M$ is an open Riemann surface and $A$ and $\mathcal A$ are as in Theorem \ref{t:main-theorem}, a harmonic map $\phi:M\to\R^n$ is said to be {\em directed by} $A$, or to be an {\em $A$-map}, if the $(1,0)$-part $\partial \phi$ of its exterior derivative $d\phi$ is a holomorphic section of $\mathcal A$ on $M$. If $M$ is a smooth affine curve, we say that $\phi$ is regular if $\partial\phi$ is.  The {\em flux map} (or just the {\em flux}) of $\phi$ is the group homomorphism ${\rm Flux}_\phi:H_1(M,\Z)\to\R^n$ defined by
\[
	{\rm Flux}_\phi(C)=\int_Cd^c\phi \quad \text{for every loop $C$ in $M$},
\]
where $d^c\phi=\imath(\overline\partial\phi-\partial\phi)$ is the conjugate differential of $\phi$.
A harmonic $A$-map $\phi:M\to\R^n$ is the real part of a holomorphic $A$-immersion $M\to\C^n$ if and only if  ${\rm Flux}_\phi(C)=0$ for every loop $C$ in $M$ or, equivalently, $d^c\phi$ is exact. 

We record the following third main result of this paper.  
We say that a harmonic map $\phi:M\to\R^n$ is regular if its $(1,0)$-differential $\partial \phi$ is a regular $1$-form on $M$.
%
%
\begin{theorem}   \label{t:second-theorem}
Let $A$, $M$, and $\mathcal A$ be as in Theorem \ref{t:main-theorem}. Let $K$ be a holomorphically convex compact subset of $M$, $\phi:K\to\R^n$ be a harmonic $A$-map on a neighbourhood of $K$, and $\mathcal F:H_1(M,\Z)\to \R^n$ be a group homomorphism with $\mathcal F(C)={\rm Flux}_\phi(C)$ for every loop $C$ in a neighbourhood of $K$.  Then the following are equivalent.
\begin{enumerate}[{\rm (i)}]
\item  $\phi$ can be uniformly approximated on a neighbourhood of $K$ by regular harmonic $A$-maps $M\to\R^n$.
\item  $\phi$ can be uniformly approximated on a neighbourhood of $K$ by regular harmonic $A$-maps $\psi:M\to\R^n$ agreeing with $\phi$ on any given finite set in $K$ and with flux map $\mathcal F$, such that $\partial\psi$ has an effective pole at each end of $M$.
\item  There is a neighbourhood $U$ of $K$ such that the homotopy class of continuous sections of $\mathcal A\,\vert\, U$ that contains $\partial\phi$ also contains the restriction of a regular section of $\mathcal A$ on $M$.
\item  There is a neighbourhood $U$ of $K$ such that the homotopy class of continuous sections of $\mathcal A\,\vert\, U$ that contains $\partial\phi$ also contains the restriction of a regular section $t$ of $\mathcal A$ on $M$ with an effective pole at each end of $M$ and representing any given class in the cohomology group $H^1(M,\C^n)$.
\end{enumerate}
\end{theorem}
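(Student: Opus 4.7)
The plan is to mirror the proof of Theorem~\ref{t:main-theorem}, with Theorem~\ref{t:sections} serving as the main analytic input. The bridge between harmonic $A$-maps and regular sections of $\mathcal{A}$ is furnished by the identities $d\phi = 2\Re\partial\phi$ and $\mathrm{Flux}_\phi(C) = \int_C d^c\phi = 2\Im\int_C\partial\phi$, both valid because $\phi$ is real. Thus a regular section $t$ of $\mathcal{A}$ on $M$ integrates to a single-valued regular harmonic $A$-map
\[
\psi(p) = \phi(p_0) + 2\Re\int_{p_0}^p t
\]
exactly when $\Re\int_C t = 0$ for every loop $C$ in $M$, in which case $\partial\psi = t$ and $\mathrm{Flux}_\psi(C) = 2\Im\int_C t$. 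To realise the prescribed flux $\mathcal F$ it therefore suffices to arrange that the periods of $t$ be given by the complex homomorphism $\mathcal G = \frac{\imath}{2}\mathcal F : H_1(M,\Z)\to\C^n$, for which $\Re\mathcal G=0$ and $2\Im\mathcal G=\mathcal F$.

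I would establish the equivalences as the cycle (ii) $\Rightarrow$ (i) $\Rightarrow$ (iii) $\Rightarrow$ (iv) $\Rightarrow$ (iii) $\Rightarrow$ (ii). The implications (ii) $\Rightarrow$ (i) and (iv) $\Rightarrow$ (iii) are immediate. For (i) $\Rightarrow$ (iii), if regular harmonic $A$-maps $\psi_n\to\phi$ uniformly on a neighbourhood of $K$, then $\partial\psi_n\to\partial\phi$ uniformly on a slightly smaller neighbourhood $U$ by the Cauchy estimates, so for large $n$ the sections $\partial\psi_n$ and $\partial\phi$ are homotopic in $\mathcal{A}\vert U$. For (iii) $\Rightarrow$ (iv), apply Theorem~\ref{t:cohomology-2} to the regular section of $\mathcal A$ on $M$ supplied by (iii): it can be deformed within its homotopy class on $M$ to a regular section with any prescribed periods, equivalently with any prescribed class in $H^1(M,\C^n)\cong\mathrm{Hom}(H_1(M,\Z),\C^n)$, and with an effective pole at each end in every component.

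The main content is (iii) $\Rightarrow$ (ii). Let $s = \partial\phi$, a holomorphic section of $\mathcal A$ on a neighbourhood of $K$. The hypothesis $\mathcal F(C) = \mathrm{Flux}_\phi(C)$ for loops $C$ near $K$, combined with the exactness of $d\phi$ there, yields $\int_C s = \mathcal G(C)$ for every loop $C$ near $K$. Assuming $K$ is connected, fix a basepoint $p_0 \in K$ disjoint from the finite interpolation set $\Lambda = \{q_1,\ldots,q_\ell\}$ and choose smooth oriented embedded arcs $C_l\subset K$ from $p_0$ to $q_l$ whose union is holomorphically convex. Hypothesis (iii) then lets us invoke Theorem~\ref{t:sections} with the homomorphism $\mathcal G$ to produce a regular section $t$ of $\mathcal A$ on $M$ which is arbitrarily close to $s$ on a neighbourhood of $K$, has an effective pole at each end, satisfies $\int_C t = \mathcal G(C)$ for every loop $C$ in $M$, and satisfies $\int_{C_l} t = \int_{C_l} s$ for $l=1,\ldots,\ell$. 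Because $\Re\mathcal G=0$, the real primitive $\psi(p)=\phi(p_0)+2\Re\int_{p_0}^p t$ is single-valued on $M$, is regular and harmonic, is directed by $A$, approximates $\phi$ uniformly near $K$, satisfies $\psi(q_l)=\phi(q_l)$, has flux $\mathcal F$, and has $\partial\psi=t$ with effective pole at each end of $M$.

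The principal obstacle I anticipate is the disconnected case of $K$, where a single basepoint cannot deliver $\psi=\phi$ on every component simultaneously via the formula above. Following the disconnected case of the proof of Theorem~\ref{t:main-theorem} in Section~\ref{sec:proofs}, I would attach smooth embedded arcs $\gamma_1,\ldots,\gamma_N$ joining a chosen component of $K$ to each of the others, forming a connected holomorphically convex admissible set $S=K\cup E$, and use Lemma~\ref{lem:paths} to deform $s/\theta$ along the $\gamma_i$ so that the integral of the resulting continuous map along $\gamma_i$ equals the difference of $\phi(p_0)$ with $\phi$ evaluated at the opposite endpoint component (with appropriate compatibility on the flux side). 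Lemma~\ref{lem:3.3.1} then approximates the modified map holomorphically on $M$ while matching periods on the $\gamma_i$ and on a basis of $H_1(S,\Z)$, reducing the problem to the connected case already treated.
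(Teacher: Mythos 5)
The paper does not give a separate proof of this theorem; it states that the proof is very similar to that of Theorem \ref{t:main-theorem} and leaves the details to the reader. Your proposal carries out exactly the intended adaptation: the dictionary $d\phi = 2\Re\partial\phi$, $\mathrm{Flux}_\phi(C) = 2\Im\int_C\partial\phi$ correctly translates the conditions on $\phi$ into conditions on $s=\partial\phi$, and replacing the target homomorphism by $\mathcal G = \tfrac{\imath}{2}\mathcal F$ is the right move: $\Re\mathcal G = 0$ guarantees single-valuedness of $\psi(p)=\phi(p_0)+2\Re\int_{p_0}^p t$, and $2\Im\mathcal G=\mathcal F$ delivers the prescribed flux. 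The verification that $\mathcal G(C)=\int_C s$ for loops $C$ near $K$, needed as the compatibility hypothesis in Theorem \ref{t:sections}, follows as you say from exactness of $d\phi$ near $K$ together with the hypothesis $\mathcal F=\mathrm{Flux}_\phi$ there. Your connected-case argument, the reduction for disconnected $K$ via the admissible set $S=K\cup E$, Lemma \ref{lem:paths} and Lemma \ref{lem:3.3.1}, and the derivation of (iii) $\Rightarrow$ (iv) from Theorem \ref{t:cohomology-2} all mirror the authors' proof of Theorem \ref{t:main-theorem} with the appropriate real/imaginary bookkeeping. This is precisely the approach the paper indicates, and your proposal is a correct execution of it.
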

Note that (iii) $\Leftrightarrow$ (iv) is guaranteed by Theorem \ref{t:cohomology}.  As in Remark \ref{rem:vj}, the sections $\partial\psi$ in (ii) and $t$ in (iv) can be chosen such that all their component $1$-forms have an effective pole at each end of $M$.
The proof of Theorem \ref{t:second-theorem} is very similar to that of Theorem \ref{t:main-theorem}, relying on the results in Section \ref{sec:sections}, the only difference being that now we can ignore the imaginary periods of the approximating sections. 

In general, harmonic $A$-maps need not be immersions; by a harmonic $A$-{\em immersion} we mean a harmonic $A$-map which is an immersion. Given a harmonic map $\phi=(\phi_1,\ldots,\phi_n):M\to\R^n$ on an open Riemann surface $M$, the holomorphic 2-form $\mathfrak H_\phi=\sum_{i=1}^n(\partial\phi_i)^2$ on $M$ is called the {\em Hopf differential} of $\phi$. It turns out that $\phi$ is an immersion if and only if $|\mathfrak H_\phi|<|\partial\phi|^2=\sum_{i=1}^n|\partial\phi_i|^2$ everywhere on $M$; see \cite[Lemma 2.4]{AlarconLopez2013TAMS}. This happens if and only if $\partial\phi/\theta$ takes its values in $\C^n\setminus (\c\cdot\R^n)$ for any nowhere-vanishing holomorphic 1-form $\theta$ on $M$.  In particular, if $A$ and $\mathcal A$ are as in Theorem \ref{t:second-theorem} with $A\cap\R^n=\varnothing$ (hence $n\ge 3$) and $\partial\phi$ is a section of $\mathcal A$, then $\phi$ is a harmonic $A$-immersion. So, the following holds.
\begin{remark}
Assuming in Theorem \ref{t:second-theorem} that $A\cap\R^n=\varnothing$, we have that the given map $\phi$ is a harmonic $A$-immersion. If, in addition, the equivalent conditions (i)--(iv) hold, then the approximating maps $\psi$ in (ii) are harmonic $A$-immersions as well.
\end{remark}

The punctured null quadric $A\subset\C_*^n$ in \eqref{eq:null}, directing holomorphic null curves in $\C^n$ and minimal surfaces in $\R^n$, is very good by Proposition \ref{pro:null-very-good}. It also satisfies $A\cap\R^n=\varnothing$. Thus, by a direct application of Theorem \ref{t:second-theorem}, we recover the following result from \cite{AlarconLopez2022APDE} (see \cite{Lopez2014TAMS, AlarconCastro-InfantesLopez2019CVPDE} for the case $n=3$); see Section \ref{ss:null} for background.
%
%
\begin{corollary}\label{c:CMI}
Let $M$ be a smooth affine curve, $K\subset M$ be a holomorphically convex compact set, $\phi:K\to\R^n$, $n\ge 3$, be a conformal minimal immersion on a neighbourhood of $K$, and $\mathcal F:H_1(M,\Z)\to\R^n$ be a group homomorphism such that $\mathcal F(C)={\rm Flux}_\phi(C)$ for every loop $C$ in a neighbourhood of $K$. Then $\phi$ can be uniformly approximated on a neighbourhood of $K$ by complete conformal minimal immersions $\psi:M\to\R^n$ of finite total curvature agreeing with $\phi$ on any given finite set in $K$ and satisfying ${\rm Flux}_\psi=\mathcal F$.
\end{corollary}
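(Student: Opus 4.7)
The plan is to invoke Theorem \ref{t:second-theorem} with $A$ taken to be the punctured null quadric from \eqref{eq:null}. First I would verify the hypotheses: $A$ is algebraically elliptic by \cite[Proposition 1.15.3]{AlarconForstnericLopez2021}, is clearly not contained in any hyperplane of $\C^n$ for $n\ge 3$, and satisfies $A\cap\R^n=\varnothing$ because the only real solution of $z_1^2+\cdots+z_n^2=0$ is the origin. As recalled in Section \ref{ss:null}, a map $\psi:M\to\R^n$ is a conformal minimal immersion precisely when it is a harmonic $A$-immersion for this $A$, so the given $\phi$ fits into the framework of Theorem \ref{t:second-theorem} as a harmonic $A$-map on a neighbourhood of $K$, with flux agreeing with $\mathcal F$ over loops near $K$ by hypothesis.

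The key point is that the punctured null quadric is very good by Proposition \ref{pro:null-very-good}, so condition (iii) of Theorem \ref{t:second-theorem} comes for free. Concretely, using Lemma \ref{lem:extension} I would extend $\partial\phi$ to a continuous section of $\mathcal A$ on all of $M$; very-goodness then furnishes a regular section $t$ of $\mathcal A$ on $M$ in the same homotopy class. Restricting $t$ to a sufficiently small neighbourhood $U$ of $K$ produces a regular section whose homotopy class as a section of $\mathcal A\,\vert\, U$ contains $\partial\phi$, which is precisely what (iii) demands.

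Applying Theorem \ref{t:second-theorem}, the equivalent condition (ii) then yields regular harmonic $A$-maps $\psi:M\to\R^n$ that approximate $\phi$ uniformly on a neighbourhood of $K$, agree with $\phi$ on any prescribed finite subset of $K$, have flux map equal to $\mathcal F$, and satisfy that $\partial\psi$ has an effective pole at each end of $M$. Because $A\cap\R^n=\varnothing$, the remark immediately following Theorem \ref{t:second-theorem} guarantees that each approximating map $\psi$ is automatically an immersion and hence a conformal minimal immersion.

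To finish, I would translate the condition on $\partial\psi$ into the geometric statement required by the corollary. By the classical correspondence recalled in Section \ref{ss:null}, a conformal minimal immersion $\psi:M\to\R^n$ of a smooth affine curve is complete and of finite total curvature if and only if $\partial\psi$ is a regular section of $\mathcal A$ with an effective pole at each end of $M$. The maps $\psi$ produced above satisfy this by construction, so they are complete and have finite total curvature, giving the desired conclusion. No serious obstacle is expected here: the whole argument reduces to verifying that Theorem \ref{t:second-theorem} applies, noting that very-goodness of the null quadric removes the topological hypothesis (iii), and invoking the standard dictionary between the effective-pole condition and completeness plus finite total curvature.
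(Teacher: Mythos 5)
Your proposal is correct and follows exactly the route the paper takes: apply Theorem~\ref{t:second-theorem} to the punctured null quadric, use Proposition~\ref{pro:null-very-good} to dispose of the topological hypothesis (iii), use $A\cap\R^n=\varnothing$ to upgrade harmonic $A$-maps to conformal minimal immersions, and read off completeness and finite total curvature from the effective-pole condition via the classical dictionary recalled in Section~\ref{ss:null}. The paper states this in a single sentence; you have simply spelled out the verification that (iii) holds (via Lemma~\ref{lem:extension} plus very-goodness), which is the correct way to unpack it.
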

As above, the approximating conformal minimal immersions $\psi=(\psi_1,\ldots,\psi_n):M\to\R^n$ in the corollary can be chosen such that $\partial\psi_k$ has an effective pole at each end of $M$ for all $k=1,\ldots,n$.


\subsection*{Acknowledgements}
A.~Alarc\'on was partially supported by the State Research Agency (AEI) 
via the grants no.\ PID2020-117868GB-I00 and PID2023-150727NB-I00, and the ``Maria de Maeztu'' Excellence Unit IMAG, reference CEX2020-001105-M, funded by MCIN/AEI/10.13039/501100011033 and ERDF/EU.  The authors thank three anonymous referees for helpful comments.




\vspace*{-1mm}
\medskip
\noindent Antonio Alarc\'{o}n

\noindent Departamento de Geometr\'{\i}a y Topolog\'{\i}a e Instituto de Matem\'aticas (IMAG), Universidad de Granada, Campus de Fuentenueva s/n, E--18071 Granada, Spain

\noindent  e-mail: {\tt alarcon@ugr.es}

\bigskip
\noindent Finnur L\'arusson

\noindent Discipline of Mathematical Sciences, University of Adelaide, Adelaide SA 5005, Australia

\noindent  e-mail: {\tt finnur.larusson@adelaide.edu.au}

\end{document}